\documentclass[twoside,12pt,leqno]{article}

%\includeonly{20201210s1}

\setlength{\oddsidemargin}{.5cm}
\setlength{\evensidemargin}{.5cm}
\setlength{\textwidth}{6in}
\setlength{\textheight}{8in}

\usepackage{color}

\usepackage[utf8]{inputenc}
\usepackage{amsthm,amssymb,physics,latexsym,amsmath,amscd,array,graphicx,enumerate,lmodern,amsfonts,mathrsfs,mathtools,mparhack,lipsum,xcolor,tikz-cd,tikz,tikzscale,pigpen,multicol,capt-of}  
\usepackage[all]{xy}
\usepackage[colorlinks, citecolor=black, urlcolor = black, linkcolor=black]{hyperref}

\usetikzlibrary{matrix,arrows,shadows,backgrounds,positioning,decorations.markings,calc}

\newcommand{\effv}{\mathrm{\mathsf{effv}}}

\newcommand{\effl}{\mathrm{\mathsf{effl}}}
\newcommand{\TC}{\mathrm{\mathsf{TC}}}
\newcommand{\imm}{\mathrm{\mathsf{Imm}}}
\newcommand{\T}{\operatorname{T}}
\newcommand{\K}{\operatorname{K}}

\newcommand{\secat}{\mathrm{\mathsf{secat}}}
\newcommand{\A}{\mathrm{\mathsf{a}}}
\newcommand{\B}{\mathrm{\mathsf{b}}}
\newcommand{\R}{\mathbb{R}}
\newcommand{\Z}{\mathbb{Z}}

\newtheorem{theorem}{Theorem}[section]
\newtheorem{definition}[theorem]{Definition}
\newtheorem{lemma}[theorem]{Lemma}
\newtheorem{proposition}[theorem]{Proposition}

\newtheorem{remark}[theorem]{Remark}
\newtheorem{corollary}[theorem]{Corollary}

%%%%%%%%%%%%%%%%%%%%%%%%%%%%%%%%%%%%%%%%%%%%%%%%%%%%%%%%%%%%%%%
%\usepackage[backend=biber, style=alphabetic,sorting=ynt]{biblatex}
%\addbibresource{Biblio.bib}
%%%%%%%%%%%%%%%%%%%%%%%%%%%%%%%%%%%%%%%%%%%%%%%%%%%%%%%%%%%%%%%

\title{Effectual Topological Complexity}
\author{Natalia Cadavid-Aguilar\footnote{The first author is grateful for support from  Fundaci\'on Sof\'ia Kovalévskaia and from FORDECYT grant 265667 ``Programa para un avance global e integral de la matem\'atica mexicana''.}, Jes\'us Gonz\'alez, B\'arbara Guti\'errez\thanks{Supported by the project 20201646 of the Secretar\'{i}a de Investigaci\'on y Posgrado at the IPN.}\\ and Cesar A.~Ipanaque-Zapata\footnote{The author would like to thank grant\#2016/18714-8, S\~{a}o Paulo Research Foundation (FAPESP) for financial support.}}

\pagestyle{myheadings}
\markboth{\small{Cadavid-Aguilar, Gonz\'alez, Guti\'errez and Ipanaque-Zapata}}
{\small{}}

\date{}

\begin{document}

\maketitle

\begin{abstract}
We introduce the effectual topological complexity (ETC) of a $G$-space $X$. This is a $G$-equivariant homotopy invariant sitting in between the effective topological complexity of the pair $(X,G)$ and the (regular) topological complexity of the orbit space $X/G$. We study ETC for spheres and surfaces with antipodal involution, obtaining a full computation in the case of the torus. This allows us to prove the vanishing of twice the non-trivial obstruction responsible for the fact that the topological complexity of the Klein bottle is~$4$. In addition, this gives a counterexample to the possibility ---suggested in Pave\v{s}i\'c's work on the topological complexity of a map--- that ETC of $(X,G)$ would agree with Farber's $\TC(X)$ whenever the projection map $X\to X/G$ is finitely sheeted. We conjecture that ETC of spheres with antipodal action recasts the Hopf invariant one problem, and describe (conjecturally optimal) effectual motion planners.
\end{abstract}

\medskip
\noindent{{\it 2010 Mathematics Subject Classification}: Primary 55M30. Secondary 57S25, 68T40, 93C85.}

\noindent{\it Keywords and phrases:} Sectional category, motion planning, $G$-space.

\section{Main results}
For a group $G$ and a $G$-space $X$, we define the \emph{effectual topological complexity} (ETC) $\TC_{\effl}^G(X)$ as the sectional category of the fibration $\epsilon: PX \rightarrow X \times (X/G)$ given by $\epsilon(\gamma)=(\gamma(0),[\gamma(1)])$, where square brackets stand for $G$-orbits. This definition is motivated by and closely related to (though different from) Błaszczyk-Kaluba's effective topological complexity (etc) $\TC_{\effv}^{G}(X)$, revisited in  Section~\ref{secrevi}. In addition, for nicely behaved $G$-spaces, ETC is a special case of Pave\v{s}i\'c's $\TC(\pi)$, the topological complexity of the projection map $\pi\colon X\to X/G$. But more importantly, ETC serves as a connecting link between $\TC_{\effv}^{G}(X)$ and Farber's topological complexity of the orbit space, $\TC(X/G)$:

\begin{theorem}\label{principalA}
If $X$ is Hausdorff and $G$ is a discrete group acting properly discontinuously on $X$, then
\begin{equation}\label{sandwich}
\TC_{\effv}^{G}(X)\leq\TC_{\effl}^G(X)\leq\TC(X/G).
\end{equation}
\end{theorem}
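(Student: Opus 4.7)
The plan is to prove the two inequalities in~\eqref{sandwich} separately, by transporting open covers and their local sections along the orbit projection $\pi\colon X\to X/G$. Under the standing hypotheses on $X$ and $G$, the map $\pi$ is a covering projection, hence a Hurewicz fibration admitting a continuous unique path-lifting function $\lambda\colon X\times_{X/G} P(X/G)\to PX$ which sends a pair $(x,\alpha)$ with $\pi(x)=\alpha(0)$ to the unique lift of $\alpha$ starting at $x$. This is the key technical tool for both bounds.

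For the right-hand inequality $\TC_{\effl}^G(X)\leq \TC(X/G)$, set $n=\TC(X/G)$ and fix an open cover $\{U_0,\dots,U_n\}$ of $(X/G)\times (X/G)$ with local sections $s_i\colon U_i\to P(X/G)$ of the endpoint fibration. I would pull back the $U_i$ along $\pi\times\mathrm{id}_{X/G}\colon X\times(X/G)\to(X/G)\times(X/G)$ to obtain an open cover $\{\tilde{U}_i\}$ of $X\times(X/G)$, and define $\tilde{s}_i(x,[y])=\lambda(x,s_i([x],[y]))$. By construction $\tilde{s}_i(x,[y])$ starts at $x$ and ends inside the orbit $[y]$, so $\epsilon\circ\tilde{s}_i=\mathrm{id}_{\tilde{U}_i}$; continuity comes for free from $\lambda$ and $s_i$.

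For the left-hand inequality $\TC_{\effv}^G(X)\leq \TC_{\effl}^G(X)$, I would appeal to Section~\ref{secrevi}: the Błaszczyk--Kaluba invariant sits below the sectional category of the endpoint map $\epsilon_n\colon P_n^GX\to X\times X$ defined on chains $(\gamma_1,\dots,\gamma_n)$ of paths in $X$ subject to the orbit-compatibility $\gamma_{j+1}(0)\in G\cdot\gamma_j(1)$, for each $n\geq 1$. It then suffices to establish $\secat(\epsilon_2)\leq \TC_{\effl}^G(X)$: given sections $s_i\colon U_i\to PX$ realising $\TC_{\effl}^G(X)$, pull back to $V_i=(\mathrm{id}_X\times\pi)^{-1}(U_i)$, an open cover of $X\times X$, and define $\sigma_i\colon V_i\to P_2^GX$ by $\sigma_i(x,y)=(s_i(x,[y]),\,c_y)$, with $c_y$ the constant path at $y$. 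The orbit-compatibility holds because $s_i(x,[y])(1)$ and $c_y(0)=y$ both lie over $[y]\in X/G$, and $\epsilon_2\circ\sigma_i=\mathrm{id}_{V_i}$ is immediate.

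The argument is essentially a bookkeeping exercise once the covering-space toolkit is in place. The main point of care is confirming continuity of the path-lifting function $\lambda$, which is standard from the hypothesis that $G$ acts properly discontinuously on a Hausdorff space; the only other subtlety is checking that the padding-by-constant-path trick translates faithfully to the precise formulation of $\TC_{\effv}^G$ adopted in Section~\ref{secrevi}.
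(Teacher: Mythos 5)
Your approach is genuinely different from the paper's. The paper deduces Theorem~\ref{principalA} as an immediate corollary of Proposition~\ref{bothsquares}, which shows that both squares in diagram~(\ref{doscuadros}) are \emph{strict pullbacks}; the sandwich then follows from the general fact that sectional category does not increase along a pullback. You instead construct local sections by hand. For the right-hand inequality $\TC_{\effl}^G(X)\leq\TC(X/G)$ your argument is correct and is, in essence, the path-lifting content of the paper's proof that the right square is a strict pullback, carried out more concretely: both hinge on the unique, continuous path-lifting property of the covering $\pi$. Your route proves less (no pullback diagram) but is shorter.

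The left-hand inequality as written has a genuine gap. The element you define, $\sigma_i(x,y)=(s_i(x,[y]),c_y)$, is a pair, but the paper's $P_2^G(X)$ sits inside $PX\times G\times PX$; its elements are triples $(\alpha_1,g,\alpha_2)$ with $\alpha_1(1)\cdot g=\alpha_2(0)$. Your $\sigma_i$ lands only in Błaszczyk--Kaluba's space $P^{G,2}(X)$, so it proves $\TC^{G,2}(X)\leq\TC_{\effl}^G(X)$, which is a priori weaker than the claim by~(\ref{comparacion}). To repair it you must produce the group coordinate: set $\sigma_i(x,y)=\bigl(s_i(x,[y]),\,g(x,y),\,c_y\bigr)$ with $g(x,y)$ the (unique, by freeness) element of $G$ carrying $s_i(x,[y])(1)$ to $y$, and then show $g(x,y)$ varies continuously in $(x,y)$. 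That continuity is exactly the statement that the action is \emph{principal}, which holds here because properly discontinuous actions on Hausdorff spaces are principal (\cite[Lemma~14.1.1]{MR2456045}) --- a fact the paper records but your sketch never invokes. Alternatively, finish by citing Remark~\ref{agreement}, which gives $\TC^{G,2}(X)=\TC_{\effv}^G(X)$ under principality. You flag this mismatch as a ``subtlety'' but the missing ingredient (continuity of the division map $\tau$, equivalently principality) is the crux of the left-hand inequality, not bookkeeping, and must be stated to close the argument.
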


Both etc and ETC are motivated by the idea of taking advantage of symmetries in the motion planning of an autonomous system (see Section~\ref{seceffel}). Yet the two concepts are essentially different from each other, as indicated next.

The case of an $n$-dimensional sphere ($n\geq1$) with antipodal involution $(S^n,\mathbb{Z}_2)$ and quotient space $P^n=S^n/\mathbb{Z}_2$, the $n$-dimensional real projective space, is particularly interesting. It is easy to see that $\TC_{\effv}^{\mathbb{Z}_2}(S^n)=1$ (Corollary~\ref{hduiwydg}). More difficult to prove~is
$$
\TC_{\effl}^{\mathbb{Z}_2}(S^n)\in\{n,n+1\}\mbox{ \ \,and \ \,}\TC(S^n/\mathbb{Z}_2)=\imm(P^n)-\begin{cases}1,&n=1,3,7;\\0,&\mbox{otherwise.}\end{cases}
$$
Here $\imm(P^n)$ stands for the smallest Euclidean dimension where $P^n$ admits a smooth immersion. The values above for $\TC_{\effv}^{\mathbb{Z}_2}(S^n)$ and $\TC(S^n/\mathbb{Z}_2)$ come from~\cite{BK} and \cite{FTY}, respectively, while the estimate for $\TC_{\effl}^{\mathbb{Z}_2}(S^n)$ comes from~\cite[Proposition 4.7]{Pavesic}. The latter estimate should be compared to a folk empirical belief that the (currently unknown) value of $\imm(P^n)$ would have the form $2n-2\alpha(n)+o(\alpha(n))$, where $\alpha(n)$ denotes the number of ones in the dyadic expansion of $n$.

Our current knowledge of $\imm(P^n)$ (summarized in~\cite{davistable}) gives that, for antipodally acted spheres, both inequalities in~(\ref{sandwich}) are strict as long as $n\geq8$. In contrast, in the parallelizable sphere case ($n=1,3,7$), $\TC_{\effl}^{\mathbb{Z}_2}(S^n)=\TC(S^n/\mathbb{Z}_2)=n$, for in fact $\imm(P^n)=n+1$. We believe that these three special values of~$n$ are the only ones having $\TC_{\effl}^{\mathbb{Z}_2}(S^n)=n$, which would yield a new form of the Hopf invariant one problem. Such a possibility should be compared to the (homotopically more accessible) fact in~\cite{MR3762837} that $\TC(S^n)$ is minimal possible precisely when the classical Hopf invariant of the Whitehead square of the identity on $S^n$ vanishes (i.e., when $n$ is odd).

As with spheres, the behavior of the inequalities in~(\ref{sandwich}) is very subtle in the case of an orientable closed surface with antipodal involution. Explicitly, think of the orientable surface $\Sigma_g$ of genus $g$ embedded in $\R^3$ as shown in Figure~\ref{embedded}, so that reflections in the $xy$-, $yz$-, and $xz$-planes yield symmetries of $\Sigma_g$.
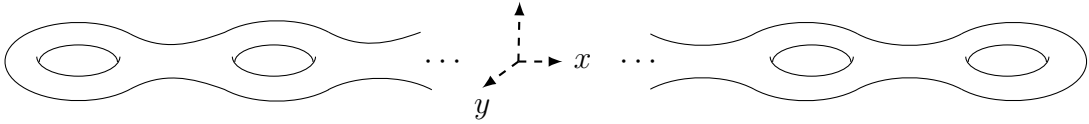
\begin{figure}[h!]
\begin{center}
\begin{tikzpicture}[scale=0.65]
%Ptos suspensivos
\node at (7.5,0,0){$\dotsb$};
\node at (11.5,0,0){$\dotsb$};
%Ejes
\draw[thick,-latex, dashed] (9,0,0) -- (9.9,0,0) node[right]{$x$};
\draw[thick,-latex, dashed] (9,0,0) -- (9.8,1,4) node[below]{$y$};
\draw[thick,-latex, dashed] (9,0,0) -- (9.8,2,2.01);
%Hoyos de la IZQ
\draw (0.8,-0.01) arc (0:180:0.8 and 0.35); 
\draw (-0.85,0.1) arc (180:360:0.85 and 0.4); 
\draw (4.8,-0.01) arc (0:180:0.8 and 0.35); 
\draw (3.15,0.1) arc (180:360:0.85 and 0.4); 
%Hoyos de la DER
\draw (15.8,-0.01) arc (0:180:0.8 and 0.35); 
\draw (14.15,0.1) arc (180:360:0.85 and 0.4); 
\draw (19.8,-0.01) arc (0:180:0.8 and 0.35); 
\draw (18.15,0.1) arc (180:360:0.85 and 0.4); 
%Exterior IZQ
\draw (1.0605,0.5656) arc (45:315:1.5 and 0.8); %izq
\draw (1.0605,0.5656) to[out=-28.1,in=200] (3,.6); %izq sup
\draw (5.12,0.60) arc (45:135:1.5 and 0.8); %izq sup
\draw (5.12,0.6) to[out=-28.1,in=200] (7,.6);%izq sup
\draw (1.0605,-0.5656) to[out=28.1,in=160] (3,-0.5756); %izq inf
\draw (2.998,-0.576) arc (225:315:1.5 and 0.8); %izq inf
\draw (7.23,-0.578) arc (45:135:1.5 and 0.8); %izq inf
%Exterior DER
\draw (18.0605,-0.5656) arc (-135:135:1.5 and 0.8);%der 
\draw (18.06,-0.566) arc (45:135:1.5 and 0.8); %der sup
\draw (13.82,-0.567) arc (225:315:1.5 and 0.8); %der sup
\draw (13.82,-0.566) arc (45:135:1.5 and 0.8); %der sup
\draw (15.94,0.565) arc (225:315:1.5 and 0.8); %der inf
\draw (15.94,0.566) arc (45:135:1.5 and 0.8); %der inf
\draw (11.70,0.566) arc (225:315:1.5 and 0.8);%der inf
\end{tikzpicture}
\end{center}
\caption{Embedding $\Sigma_g\subset\mathbb{R}^3$}
\label{embedded}
\end{figure}
Let $\sigma$ stand for the ``antipodal" (orientation-reversing) involution on $\Sigma_g$ given by $\sigma(x,y,z)=(-x,-y,-z)$. This makes $\Sigma_g$ into a $\Z_2$-space with quotient $N_{g+1}$, the nonorientable surface of genus $g+1$. The case $g=1$ is similar to the situation noted above for $S^n$ with $n\geq8$, in that:

\begin{theorem}\label{PrincipalB}
Both inequalities in~(\ref{sandwich}) are strict for $(X,G)=(\T,\mathbb{Z}_2)$, the 2-torus with antipodal involution. In fact
\begin{equation}\label{cfto}
\TC_{\effv}^{\mathbb{Z}_2}(\T)=2, \hspace{2mm}\TC_{\effl}^{\mathbb{Z}_2}(\T)=3\mbox{ \ and \ }\TC(\T/\mathbb{Z}_2)=4.
\end{equation}
\end{theorem}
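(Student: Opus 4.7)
The plan is to establish each of the three values in~(\ref{cfto}) separately. Two of them are essentially known: $\TC(\T/\Z_2)=\TC(N_2)=4$ is the Cohen--Vandembroucq theorem for the Klein bottle, and $\TC_{\effv}^{\Z_2}(\T)=2$ is pinned down by the trivial upper bound $\TC_{\effv}^{\Z_2}(\T)\leq\TC(\T)=2$ together with a matching B{\l}aszczyk--Kaluba-style cohomological lower bound. The remaining value $\TC_{\effl}^{\Z_2}(\T)=3$ is the new content.

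For the lower bound, I would use the evaluation equivalence $P\T\simeq\T$ to identify $\epsilon^*$ with $(1_{\T},\pi)^*\colon H^*(\T\times K;\Z_2)\to H^*(\T;\Z_2)$, where $\pi\colon\T\to K$ is the orientation double cover. With $H^*(K;\Z_2)=\Z_2[u,v]/(v^2,u^2+uv)$ (so $v=w_1(K)$) and $H^*(\T;\Z_2)=\Z_2[\alpha,\beta]/(\alpha^2,\beta^2)$, the map $\pi^*$ kills $v$ and sends $u$ to $\alpha$. Consequently $z_1:=\alpha+u\in H^1$ and $z_2:=\alpha\beta+\beta u\in H^2$ both lie in $\ker(1_{\T},\pi)^*$, and the mod-$2$ computation $z_1^2\cdot z_2=uv\cdot(\alpha\beta+\beta u)=\alpha\beta uv+\beta u^2v=\alpha\beta uv$ (using $u^2=uv$ and $v^2=0$) exhibits three zero-divisors whose product is the fundamental class of $\T\times K$. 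Hence $\secat(\epsilon)\geq 3$.

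For the upper bound, the goal is to cover $\T\times K$ by four open sets each admitting a section of $\epsilon$. The antipodal $\sigma$ is compatible with a natural fibration $q\colon K\to S^1$ characterized by $q\circ\pi=2\cdot\mathrm{pr}_2$, so pulling back a $2$-arc open cover of $S^1$ (chosen with intersection of two connected components) produces a cover $K=V_0\cup V_1$ on which $\pi$ admits sections $\hat y_i\colon V_i\to\T$; in particular $\secat(\pi)=1$. Combined with the standard $3$-piece group-theoretic motion planner on $\T$ witnessing $\TC(\T)=2$, one obtains six candidate open sets $A_{ij}=\{(x,[y])\in\T\times V_i:\hat y_i([y])-x\in U_j\}$ with explicit sections of $\epsilon$. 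On one component of $V_0\cap V_1$ the two lifts $\hat y_0,\hat y_1$ agree (so $A_{0j}$ and $A_{1j}$ can be fused across it), while on the other they differ by the deck transformation $\sigma$; this bad region is then absorbed into one additional piece engineered from the group structure of $\T$, compressing the six naive pieces down to four.

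The main obstacle is this final amalgamation step. The nontrivial monodromy of $\pi$ over the ``bad'' component of $V_0\cap V_1$ obstructs any naive pasting of local motion planners, and coordinating the choices of $V_i$, the lifts $\hat y_i$, and the categorical cover of $\T$ so that the six candidate pieces compress precisely to four is the delicate technical point; an alternative, possibly cleaner route would be to invoke a Pave\v{s}i\'c-style abstract inequality of the form $\TC(\pi)\leq\TC(X)+\secat(\pi)$ tailored to the covering-map setting, which in our case would immediately yield $\TC_{\effl}^{\Z_2}(\T)\leq 2+1=3$.
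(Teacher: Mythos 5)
Your treatment of the lower bound is sound and, up to a change of basis, is the same argument the paper uses: with $\epsilon$ identified with $(1,\pi)\colon\T\to\T\times\K$ via the contraction $P\T\simeq\T$, you exhibit elements in $\ker(1,\pi)^*$ whose threefold product is the mod-$2$ fundamental class of $\T\times\K$. (The paper cubes the single class $\alpha\otimes1+\beta\otimes1+1\otimes\lambda$, yielding $(\alpha+\beta)\otimes\mu$; your $z_1^2\,z_2=\alpha\beta\otimes uv$ is the same bound with a different choice of generators, and your presentation of $H^*(\K;\Z_2)$ checks out.) The citations for $\TC_{\effv}^{\Z_2}(\T)=2$ and $\TC(\K)=4$ are also what the paper relies on.

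The genuine gap is the upper bound $\TC_{\effl}^{\Z_2}(\T)\leq3$, which is precisely the hard part of the theorem. Your proposal offers two routes and completes neither. The ``compress six naive sets to four'' plan is a sketch: you correctly identify the obstruction (the monodromy of $\pi$ over one component of $V_0\cap V_1$), but you do not say how the absorption into a fourth piece is actually carried out, and you acknowledge this is the delicate point. The fallback inequality $\TC(\pi)\leq\TC(X)+\secat(\pi)$ is also not available off the shelf: Pave\v{s}i\'c's paper does not state such an additive bound, and proving it would require a Ganea/Ostrand-type stacked-cover argument (the naive pullback construction only gives the multiplicative bound $(\TC(X)+1)(\secat(\pi)+1)$ many domains). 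Had such a clean inequality been at hand, the authors would have had no reason to spend the bulk of Section~\ref{sectoro} constructing the explicit four-domain effectual planner with domains $D_1,D_2,D_3=D_{31}\cup D_{32},D_4$ built from the arcs $M_x$, $A_x$, $C_x^I$, $C_x^D$, $C_x$ on the torus, together with the closed-ANR flexibility of Remark~\ref{ANR} that lets the domains be non-open. So as written your proof establishes $\TC_{\effl}^{\Z_2}(\T)\geq3$ but not $\leq3$, and hence does not yet prove the theorem. Either carry out the amalgamation concretely (the paper's construction shows one way to do it), or prove the additive covering-map inequality you want to invoke.
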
 

The values for $\TC_{\effv}^{\mathbb{Z}_2}(\T)$ and $\TC(\T/\mathbb{Z}_2)$ come from~\cite[paragraph following Theorem~1.1]{JN} and \cite{CV}, respectively. Unlike the first equality in~(\ref{cfto}), the proof argument (in Section~\ref{sectoro}) for the second equality in~(\ref{cfto}) is far from being elementary; yet it is not as technically involved as Cohen-Vandembroucq's proof of the third equality in~(\ref{cfto}). On the other hand, Theorem~\ref{PrincipalB} is somehow singular among surfaces for, just as for parallelizable spheres, at least one of the inequalities in~(\ref{sandwich}) is an equality in the case of a larger genus surface. Indeed, Theorem~\ref{principalA}, \cite{CV} and~\cite[Theorem~1.1]{JN} yield
\begin{equation}\label{supersandwich}
3\leq\TC_{\effv}^{\mathbb{Z}_2}(\Sigma_g)\leq \TC_{\effl}^{\mathbb{Z}_2}(\Sigma_g)\leq\TC(\Sigma_g/\mathbb{Z}_2)=4, \quad\mbox{for $g\geq2$.}
\end{equation}
%\begin{theorem}\label{monocity}
%Let $f(g)$ denote either of the three functions $$\mbox{$f_{\effv}(g):=\TC_{\effv}^{\mathbb{Z}_2}(\Sigma_g)\leq f_{\effl}(g):=\TC_{\effl}^{\mathbb{Z}_2}(\Sigma_g)\leq f_{\quot}(g):=\TC(\Sigma_g/\mathbb{Z}_2)$}.$$ Then $3\leq f(2)\leq f(3) \leq \cdots\leq4$. In particular $f(g)$ is independent of $g\gg0$.
%\end{theorem}
%The assertion $3\leq f(g)\leq4$, for $g\geq2$, in Theorem~\ref{monocity} is proved in~\cite[Theorem~1.1]{JN}, while in fact $f_{\quot}(g)=4$ is known to hold for $g\geq1$ (\cite{CV}). 
We have not been able to construct a $4$-domains effectual motion planner for $\Sigma_2$, neither in the effective nor in the effectual realms, and it might of course be plausible that all the numbers in~(\ref{supersandwich}) equal $4$. Such a possibility could be addressed via obstruction theory, though the explicit details would seem to be even more challenging than those carried out for $g=1$ in~\cite{CV} (note that $\Sigma_1/\mathbb{Z}_2$ is the Klein bottle). %Still, obstruction theory is used in Subsection~\ref{monito} to deal with the monotonicity assertion in Theorem~\ref{monocity}.

\section{Effective topological complexity revisited}\label{secrevi}
Let $PX$ stand for the free-path space of a topological space $X$. Recall that Farber's topological complexity $\TC(X)$ is the sectional category of the end-points evaluation map $e_{0,1}\colon PX\to X\times X$. (We use sectional category of a map $f\colon E\to B$ in the reduced sense, i.e., one less than the minimal number of open sets covering $B$ and on each of which $f$ admits a homotopy local section.) Let $G$ be a topological group acting on the right on $X$. For an integer $k\geq 2$, let $P_k^G(X)$ be the subspace of $(PX \times G)^{k-1} \times PX$ consisting of the tuples $(\alpha_1, g_1, \ldots, \alpha_{k-1}, g_{k-1}, \alpha_k)$ such that $\alpha_i(1)\cdot g_i=\alpha_{i+1}(0)$. Note that $P_k^G(X)$ sits inside $P_{k+1}^G(X)$ as a subspace retract, with inclusion $\iota: P_k^G(X) \rightarrow P_{k+1}^G(X)$ and retraction $r:P_{k+1}^G(X) \rightarrow P_k^G(X)$ given by 
\begin{align*}
\iota(\alpha_1, g_1, \ldots,g_{k-1}, \alpha_k)&=(\alpha_1, g_1, \ldots, g_{k-1}, \alpha_k, e, \overline{\alpha_k(1)}),\\r(\alpha_1, g_1, \ldots, g_{k}, \alpha_{k+1})&=(\alpha_1, g_1, \ldots, g_{k-1}, \alpha_k),
\end{align*}
where $\overline{\alpha_k(1)}$ stands for the path with constant value $\alpha_k(1)$.

\begin{figure}[h]
    \centering
   \begin{tikzpicture}
   \begin{scope}[thin, decoration={
    markings,
    mark=at position 0.5 with {\arrow{latex}}}] 
   \draw[postaction={decorate},thick] (0, 0) .. controls(1,2) and (2, 1.2) .. (3, 2);
   \draw[postaction={decorate},thick] (2, 0) .. controls(3,1.3) and (4, 0.5) .. (5, 2);
   \draw[postaction={decorate},thick] (4, 0) .. controls(4.3,1.5) and (6.3, 0.4) .. (7, 2);
   \draw (.9,1.2) node[above left] {$\alpha_1$};
   \draw (3.9,1.2) node[above left] {$\alpha_2$};
   \draw (6,1.2) node[above left] {$\alpha_3$};
   \draw (2.5,0) node[below left] {$\alpha_1(1)\cdot g_1$};
   \draw (4.5,0) node[below left] {$\alpha_2(1)\cdot g_2$};
   \draw [fill=black] (0,0) circle (1.5pt);
   \draw [fill=black] (3,2) circle (1.5pt);
   \draw [fill=black] (2,0) circle (1.5pt);
   \draw [fill=black] (5,2) circle (1.5pt);
   \draw [fill=black] (4,0) circle (1.5pt);
   \draw [fill=black] (7,2) circle (1.5pt);
   \end{scope}
   \end{tikzpicture}
    \caption{An element of $P_3^G(X)$}
    \label{fig1}
\end{figure}
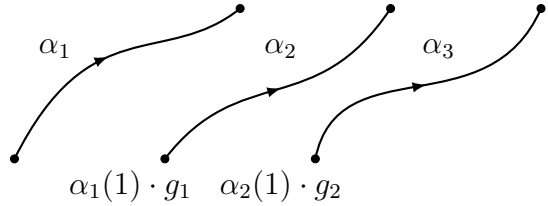

\begin{lemma}\label{lemtwisted}
The $G$-twisted evaluation map $\varepsilon: PX \times G \rightarrow X \times X$ given by $\varepsilon (\alpha, g)=(\alpha(0), \alpha(1)\cdot g)$ is a fibration.
\end{lemma}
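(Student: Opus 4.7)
The plan is to reduce the fibration property of $\varepsilon$ to the well-known fact that the end-points evaluation $e_{0,1}\colon PX\to X\times X$ is a Hurewicz fibration for any space $X$. The key observation is that the factor $g$ in a point $(\alpha,g)\in PX\times G$ can be held constant during any path lift, since $G$ is a group, and the twist $\alpha(1)\cdot g$ can be undone by the continuous inverse map in the topological group.

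To verify the homotopy lifting property, I would take an arbitrary test space $Y$, a homotopy $H=(H_0,H_1)\colon Y\times I\to X\times X$, and an initial lift $\tilde h\colon Y\to PX\times G$, writing $\tilde h(y)=(\alpha_y,g_y)$ with $\alpha_y(0)=H_0(y,0)$ and $\alpha_y(1)\cdot g_y=H_1(y,0)$. I would then consider the auxiliary homotopy
\[
H'\colon Y\times I\to X\times X,\qquad H'(y,t)=\bigl(H_0(y,t),\,H_1(y,t)\cdot g_y^{-1}\bigr),
\]
which is continuous because inversion in $G$ and the action $X\times G\to X$ are continuous, and $y\mapsto g_y$ is continuous via $\tilde h$. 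At $t=0$ this homotopy is covered by $y\mapsto\alpha_y\in PX$, since $\alpha_y(1)=H_1(y,0)\cdot g_y^{-1}$.

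Applying the homotopy lifting property of the standard fibration $e_{0,1}\colon PX\to X\times X$ to $H'$ produces a continuous family of paths $\beta_{y,t}\in PX$ with $\beta_{y,0}=\alpha_y$ and $e_{0,1}(\beta_{y,t})=H'(y,t)$, i.e., $\beta_{y,t}(0)=H_0(y,t)$ and $\beta_{y,t}(1)=H_1(y,t)\cdot g_y^{-1}$. I would then define
\[
\tilde H\colon Y\times I\to PX\times G,\qquad \tilde H(y,t)=(\beta_{y,t},\,g_y),
\]
and check directly that $\tilde H(y,0)=\tilde h(y)$ and $\varepsilon\bigl(\tilde H(y,t)\bigr)=\bigl(\beta_{y,t}(0),\,\beta_{y,t}(1)\cdot g_y\bigr)=H(y,t)$.

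There is no real obstacle in this argument, which is the point: the only care needed is to exploit the topological group structure of $G$ (so that $g_y^{-1}$ varies continuously in $y$) and to recognize that one may keep the $G$-coordinate constant along the lifted homotopy. The whole proof is essentially a one-line reduction to the classical Hurewicz fibration $e_{0,1}$, repackaged with a twist by $g_y^{-1}$.
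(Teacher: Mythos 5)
Your proof is correct but proceeds by a genuinely different route than the paper's. The paper disposes of the lemma in one line by recognizing $\varepsilon$ as the standard mapping-path fibration (``fibrational substitute'') of the $G$-saturated diagonal $\Delta_G\colon X\times G\to X\times X$, $\Delta_G(x,g)=(x,xg)$; that is, $\Delta_G$ factors as $\varepsilon\circ(c\times 1)$ with $c\colon X\hookrightarrow PX$ the constant-path inclusion, and $PX\times G$ is identified with the mapping-path space of $\Delta_G$ via a homeomorphism of the same flavor as the map $\phi$ appearing in the proof of Proposition~\ref{caracterizaciones}. That argument is short and conceptual but leaves the identification implicit. Your argument instead verifies the homotopy lifting property directly, reducing to the known fibration $e_{0,1}\colon PX\to X\times X$ by twisting the homotopy $H_1$ by $g_y^{-1}$, lifting through $e_{0,1}$, and carrying the $G$-coordinate along unchanged. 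The computation checks out: $H'(y,0)=(\alpha_y(0),\alpha_y(1))$ so $y\mapsto\alpha_y$ is a valid initial lift, and $\varepsilon(\beta_{y,t},g_y)=(H_0(y,t),H_1(y,t))$. Your version is more elementary and self-contained (no appeal to the mapping-path construction and its identification with $PX\times G$), while the paper's buys brevity and sets up $\Delta_G$ for reuse in Proposition~\ref{caracterizaciones}. Both correctly treat $G$ as an arbitrary topological group, using only continuity of inversion, multiplication, and the action.
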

\begin{proof}
Note that $\varepsilon$ is the standard fibrational substitute of the ``$G$-saturated diagonal'' $\Delta_G: X \times G \rightarrow X \times X$ given by $\Delta_G(x,g)= (x, xg)$.
\end{proof}

\begin{proposition}\label{propekfib}
For $k \geq 2$, the evaluation map $e_k:P_k^G(X) \rightarrow X \times X$ given by $e_k(\alpha_1, g_1, \ldots, g_{k-1}, \alpha_k)=(\alpha_1(0), \alpha_k(1))$ is a fibration.
\end{proposition}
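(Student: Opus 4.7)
The plan is to induct on $k \geq 2$, leveraging the twisted evaluation fibration $\varepsilon \colon PX \times G \to X \times X$ from Lemma~\ref{lemtwisted} together with standard closure properties of fibrations under pullback, product, and composition. For a uniform launching pad, set $P_1^G(X) := PX$ and $e_1 := e_{0,1}$, the classical free-path fibration; this handles the base of the induction.

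For the inductive step, assume $e_{k-1}$ is a fibration. The key observation is that $P_k^G(X)$ is naturally the pullback of $\varepsilon$ along the map $q \colon X \times P_{k-1}^G(X) \to X \times X$ defined by $q\bigl(x,(\alpha_2,g_2,\ldots,g_{k-1},\alpha_k)\bigr) = (x,\alpha_2(0))$. Indeed, a pullback point is a pair $\bigl((\alpha_1,g_1),(x,\mathbf{y})\bigr)$ with $\alpha_1(0)=x$ and $\alpha_1(1)\cdot g_1 = \alpha_2(0)$; the first equation makes $x$ redundant, and the second is precisely the $G$-linking condition at position $1$ needed to assemble $(\alpha_1,g_1,\mathbf{y})$ into an element of $P_k^G(X)$. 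Since $\varepsilon$ is a fibration by Lemma~\ref{lemtwisted}, so is the pullback projection $\psi_k \colon P_k^G(X) \to X \times P_{k-1}^G(X)$, $(\alpha_1,g_1,\alpha_2,\ldots,\alpha_k)\mapsto(\alpha_1(0),(\alpha_2,\ldots,\alpha_k))$.

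To finish, note that $e_k = f\circ\psi_k$, where $f \colon X \times P_{k-1}^G(X) \to X \times X$ sends $(x,\mathbf{y})$ to $(x,\alpha_k(1))$; that is, $f = \mathrm{id}_X \times (\mathrm{pr}_2 \circ e_{k-1})$. The map $\mathrm{pr}_2\circ e_{k-1}$ is a fibration (composition of the fibration $e_{k-1}$, available by inductive hypothesis, with the projection $\mathrm{pr}_2 \colon X \times X \to X$), and therefore so is $f$. Composing the two fibrations gives $e_k$, completing the induction.

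The main obstacle is bookkeeping---correctly identifying $P_k^G(X)$ as a pullback of $\varepsilon$ and organizing the factorization of $e_k$---after which nothing beyond standard closure properties of fibrations is needed. A useful alternative (and sanity check) would be a direct verification of homotopy lifting: given $H=(H_1,H_2)\colon Z\times I \to X\times X$ and an initial lift $\tilde H_0(z)=(\alpha_1^z,g_1^z,\ldots,g_{k-1}^z,\alpha_k^z)$, construct $\tilde H(z,t)$ by modifying only $\alpha_1^z$ (prepending a reparametrized reverse of $H_1(z,\cdot)|_{[0,t]}$) and $\alpha_k^z$ (appending a reparametrized piece of $H_2(z,\cdot)|_{[0,t]}$), while leaving every intermediate path and every $g_i^z$ untouched; because $\alpha_1^z(1)$ and $\alpha_k^z(0)$ are preserved by these modifications, every $G$-linking condition automatically survives.
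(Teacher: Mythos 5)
Your proof is correct, and the strategy is recognizably the same in spirit as the paper's---both hinge on Lemma~\ref{lemtwisted} (that $\varepsilon$ is a fibration), pullback stability, and composition/product closure of fibrations---but the decomposition is genuinely different. The paper exhibits $e_k$ all at once: $P_k^G(X)$ is the pullback of the product fibration $\varepsilon^{k-1}\times e_{01}\colon (PX\times G)^{k-1}\times PX \to (X\times X)^k$ along the ``diagonal inclusion'' $1\times\Delta^{k-1}\times 1\colon X\times X^{k-1}\times X\hookrightarrow (X\times X)^k$, so that the pullback projection to $X\times X^{k-1}\times X$ is a fibration, and $e_k$ is then that fibration followed by the coordinate projection $\pi_{1,k+1}$. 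Your argument instead peels off one $(\alpha_1,g_1)$-factor at a time by induction: $P_k^G(X)$ is the pullback of $\varepsilon$ along $q\colon X\times P_{k-1}^G(X)\to X\times X$, and $e_k$ factors through that pullback fibration followed by $\mathrm{id}_X\times(\text{second coordinate of }e_{k-1})$. The verification that the pullback is as claimed, and that the factorization $e_k=f\circ\psi_k$ holds, is correct. In effect you have unrolled the paper's single big pullback into $k-1$ one-step pullbacks; the paper's version is more compact, yours makes the inductive structure of $P_k^G(X)$ explicit and needs only the single fibration $\varepsilon$ at each step rather than the $(k-1)$-fold product. Your closing remark about a direct homotopy-lifting verification (modify only $\alpha_1$ at the start and $\alpha_k$ at the end, preserving all linking conditions) is also a valid and pleasantly elementary alternative.
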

\begin{proof}
This follows from the commutative diagram with pullback square 
$$\xymatrix{P_k^G(X)\ar@{^(->}[r]\ar[d] \ar@/_{20mm}/[dd]_{e_k} & (PX\times G)^{k-1}\times PX\ar[d]^{\varepsilon^{k-1}\times e_{01}} \\ X\times X^{k-1}\times X \ar@{^(->}[r]_{1\times\Delta^{k-1}\times 1} \ar[d]^{\pi_{1,k+1}} & (X\times X)^k \\ X\times X}
$$
where $\pi_{1,k+1}$ projects onto the first and last coordinates.
\end{proof}

\begin{definition}\label{ladefetc}{\em
The $k$-effective topological complexity ($k$-etc) of the $G$-space $X$,
denoted by $\TC_{\effv}^{G,k}(X)$, is the sectional category of 
$e_k$. 
}\end{definition}

Błaszczyk-Kaluba's $k$-th effective topological complexity $\TC^{G,k}(X)$ is defined in~\cite{BK} as the sectional category of the fibration $\varepsilon_k:P^{G,k}(X) \rightarrow X \times X$. Here 
$$
P^{G, k}(X)=\{(\alpha_1, \ldots, \alpha_k) \in (PX)^k \colon \alpha_i(1) \cdot G = \alpha_{i+1}(0) \cdot G\,\,\, \text{for $1\leq i<k$} \}
$$
and $\varepsilon_k(\alpha_1, \ldots, \alpha_k)= (\alpha_1(0), \alpha_k(1))$. As detailed in the next paragraphs, $k$-etc has a slightly better behavior than Błaszczyk-Kaluba's. For starters, $\TC^{G,k}_{\effv}(X)$ keeps the basic properties of $\TC^{G,k}(X)$ (Propositions~\ref{htpyinvariance} and~\ref{groupnat} below). In fact, the equality $\TC^{G,k}_{\effv}(X)=\TC^{G,k}(X)$ holds for reasonably nice $G$-spaces (Remark~\ref{agreement} below). But more importantly, $k$-etc has better conceptual properties than Błaszczyk-Kaluba's (see Propositions~\ref{estable}, \ref{caracterizaciones} and~\ref{bothsquares} below). 

\begin{remark}\label{agreement}{\em
An element in $P_k^G(X)$ is designed to encode precise ``leaping'' information that assembles a broken path in $P^{G,k}(X)$. In particular, the projection $P\colon P_k^G(X)\to P^{G,k}(X)$ that forgets all the ``group coordinates'' satisfies $e_k=\varepsilon_k\circ P$, so that
\begin{equation}\label{comparacion}
\TC^{G,k}(X)\leq\TC_{\effv}^{G,k}(X).
\end{equation}
This inequality is in fact an equality if the $G$-action on $X$ is principal, for in such a case $P\colon P_k^G(X) \rightarrow P^{G,k}(X)$ is a homeomorphism. (Recall that a free action is principal if the map $\tau: \Im(\Delta_G)\rightarrow G$ satisfying $x \cdot \tau (x, y)=y$ is continuous, where $\Delta_G$ is the $G$-saturated diagonal in the proof of Lemma~\ref{lemtwisted}.)
}\end{remark}

The restriction of $e_{k+1}$ to $P_k^G(X)$ is $e_k$, which readily gives the monotonic behavior
\begin{equation}\label{monobeha}
\TC_{\effv}^{G, k+1}(X)\leq\TC_{\effv}^{G,k}(X),
\end{equation}
an analogue of $\TC^{G, k+1}(X)\leq\TC^{G,k}(X)$ (\cite[Lemma~3.2(2)]{BK}). While Błaszczyk and Kaluba's show the latter inequality to be an equality for principal actions,~(\ref{monobeha}) is sharp under no special restrictions.

\begin{proposition}\label{estable}
 For $k \geq 2$, $\TC_{\effv}^{G,k}(X)=\TC_{\effv}^{G, k+1}(X).$
 \end{proposition}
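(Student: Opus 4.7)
The plan is to establish the nontrivial direction $\TC_{\effv}^{G,k}(X)\leq\TC_{\effv}^{G,k+1}(X)$; combined with the monotonicity~(\ref{monobeha}) already noted, this yields equality. The strategy is to produce a continuous map $\Phi\colon P_{k+1}^G(X)\to P_k^G(X)$ lying over $X\times X$, meaning $e_k\circ\Phi=e_{k+1}$. Any such $\Phi$ converts a (homotopy) local section of $e_{k+1}$ over $U\subseteq X\times X$ into one of $e_k$ over the same $U$, forcing $\secat(e_k)\leq\secat(e_{k+1})$.

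Note that the retraction $r$ introduced before the statement does not lie over $X\times X$: indeed $e_k\circ r$ records $\alpha_k(1)$ whereas $e_{k+1}$ records $\alpha_{k+1}(1)$. Instead I propose to absorb the last pair $(g_k,\alpha_{k+1})$ into the penultimate coordinate via a right-twist. Define
$$
\Phi(\alpha_1,g_1,\ldots,\alpha_k,g_k,\alpha_{k+1}) := (\alpha_1,g_1,\ldots,g_{k-2},\,g_{k-1}g_k,\,\widetilde{\alpha}_k\ast\alpha_{k+1}),
$$
where $\widetilde{\alpha}_k(t):=\alpha_k(t)\cdot g_k$ and $\ast$ denotes path concatenation with the usual reparametrization. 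The translation by $g_k$ ensures $\widetilde{\alpha}_k(1)=\alpha_k(1)\cdot g_k=\alpha_{k+1}(0)$, so the concatenation is well defined; its initial point is $\alpha_k(0)\cdot g_k$, which explains the replacement of $g_{k-1}$ by $g_{k-1}g_k$. When $k=2$ the formula reads $\Phi(\alpha_1,g_1,\alpha_2,g_2,\alpha_3)=(\alpha_1,g_1g_2,\widetilde{\alpha}_2\ast\alpha_3)$.

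Three routine verifications will complete the argument. First, $\Phi$ lands in $P_k^G(X)$: the only new compatibility condition is $\alpha_{k-1}(1)\cdot(g_{k-1}g_k)=\widetilde{\alpha}_k(0)$, which follows from $\alpha_{k-1}(1)\cdot g_{k-1}=\alpha_k(0)$ by applying $g_k$ on the right. Second, $e_k\circ\Phi=e_{k+1}$, since the concatenated path ends at $\alpha_{k+1}(1)$. Third, $\Phi$ is continuous by continuity of the $G$-action on $X$, of group multiplication in $G$, and of path concatenation. The one mildly delicate item is the last of these, but it is standard once the free path space $PX$ is given the compact-open topology (the implicit convention throughout the paper). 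I do not foresee any genuine obstacle in executing this plan.
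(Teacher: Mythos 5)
Your map $\Phi$ is exactly the map $f_k$ used in the paper's proof (with $\widetilde{\alpha}_k = \alpha_k\cdot g_k$, the modified group coordinate $g_{k-1}g_k$, and the concatenation $\widetilde{\alpha}_k\ast\alpha_{k+1}$), and the verifications you outline are the ones the paper leaves implicit; the argument is correct and takes essentially the same approach. The only cosmetic slip is that in your displayed formula the coordinate $\alpha_{k-1}$ was dropped between $g_{k-2}$ and $g_{k-1}g_k$, though your $k=2$ instance makes the intent unambiguous.
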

 \begin{proof}
In view of~(\ref{monobeha}), it suffices to observe that the map $f_k: P_{k+1}^G(X) \rightarrow P_k^G(X)$, $f_k(\alpha_1, g_1, \ldots, \alpha_{k-1}, g_{k-1}, \alpha_k, g_k, \alpha_{k+1})=(\alpha_1, g_1, \ldots, \alpha_{k-1}, g_{k-1}g_k, (\alpha_k\cdot g_k) \star \alpha_{k+1})$, yields a commutative diagram
$$
\xymatrix{P_{k+1}^G(X) \ar[r]^{f_{k}} \ar[d]_{e_{k+1}} & P_k^G(X) \ar[dl]^{e_k} \\ X \times X.}
$$
Here $\star$ stands for concatenation of paths.
\end{proof}

It therefore suffices to restrict attention to $\TC_{\effv}^{G,2}(X)$ which, from now on, will simply be denoted by $\TC_{\effv}^G(X)$.

\begin{proposition}\label{caracterizaciones}
Let $\varepsilon$ and $\Delta_{G}$ be the $G$-twisted evaluation map and the $G$-saturated diagonal, respectively (see Lemma~\ref{lemtwisted} and its proof). Then
$$\TC_{\effv}^G(X)=\secat(\varepsilon)=\secat(\Delta_G).$$
\end{proposition}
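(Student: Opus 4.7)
I split the claim into $\TC_{\effv}^G(X)=\secat(e_2)=\secat(\varepsilon)=\secat(\Delta_G)$; the first identity is immediate from Definition~\ref{ladefetc} combined with the convention introduced just after Proposition~\ref{estable}, so the real work is to establish the two remaining equalities.

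The equality $\secat(\varepsilon)=\secat(\Delta_G)$ is essentially formal. I record the factorization $\Delta_G=\varepsilon\circ j$, where $j\colon X\times G\to PX\times G$ sends $(x,g)$ to $(c_x,g)$ with $c_x$ the constant path at $x$. The map $j$ is a homotopy equivalence, a homotopy inverse being the evaluation-at-zero projection $(\alpha,g)\mapsto(\alpha(0),g)$. Since sectional category in the sense of homotopy local sections is invariant under precomposition with a homotopy equivalence, this forces $\secat(\Delta_G)=\secat(\varepsilon)$.

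For $\secat(e_2)=\secat(\varepsilon)$ I set up a two-way correspondence of local sections over an arbitrary open set $U\subset X\times X$. In the direction $\varepsilon\leadsto e_2$, a local section $(x,y)\mapsto(\alpha,g)$ of $\varepsilon$ is extended by appending a constant path: the assignment $(x,y)\mapsto(\alpha,g,c_y)$ lies in $P_2^G(X)$ because $\alpha(1)\cdot g=y=c_y(0)$, and is a section of $e_2$. In the direction $e_2\leadsto\varepsilon$, a local section $(x,y)\mapsto(\alpha_1,g_1,\alpha_2)$ of $e_2$ is collapsed by a shift-and-concatenate: the assignment $(x,y)\mapsto\bigl(\alpha_1\star(\alpha_2\cdot g_1^{-1}),\,g_1\bigr)$ is a section of $\varepsilon$, since $(\alpha_2\cdot g_1^{-1})(0)=\alpha_1(1)$ makes the concatenation well-defined and $\varepsilon$ of the output returns $\bigl(\alpha_1(0),\alpha_2(1)g_1^{-1}\cdot g_1\bigr)=(x,y)$.

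The main point of care is to verify continuity of the collapsing assignment in $(x,y)$, which rests on joint continuity of the right $G$-action on $X$, of group inversion in $G$, and of path concatenation; once continuity is in hand the correspondence is manifestly compatible with restriction to open subsets, and both fibrations allow one to pass freely between strict and homotopy local sections (Lemma~\ref{lemtwisted} and Proposition~\ref{propekfib}). Combining the two nontrivial equalities with the first identity yields the proposition.
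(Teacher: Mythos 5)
Your argument is correct and follows essentially the same route as the paper: the key in both is the shift-and-concatenate map $(\alpha_1,g,\alpha_2)\mapsto(\alpha_1\star(\alpha_2\cdot g^{-1}),g)$, which the paper packages as an explicit fiberwise homeomorphism $\phi\colon P_2^G(X)\to PX\times G$ (with inverse given by splitting $\alpha$ at $t=1/2$), together with the factorization $\Delta_G=\varepsilon\circ(c\times 1)$ through the constant-path inclusion. Your substitution of ``append a constant path'' for the paper's $\phi^{-1}$ is a harmless simplification, since any fiberwise map over $X\times X$ suffices to transport local sections and hence compare sectional categories.
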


\begin{proof} The map $\phi: P_2^G(X) \rightarrow PX \times G$ given by $\phi(\alpha_1, g, \alpha_2)= (\alpha_1 \star (\alpha_2\cdot g^{-1}), g)$ is a homeomorphism with inverse given by $\phi^{-1}(\alpha, g)=(\alpha', g, \alpha'' \cdot g)$, where $\alpha',\alpha'' \in PX$ are given by $\alpha'(t)= \alpha (t/2)$ and $\alpha''(t)= \alpha(\frac{1+t}{2})$ (see Figure~\ref{fig2}).
 \begin{figure}[h]
    \centering
   \begin{tikzpicture}
   \begin{scope}[very thick,decoration={
    markings,
    mark=at position 0.5 with {\arrow{latex}}}] 
   \draw[postaction={decorate},thick] (0, 0) .. controls(1,0.3) .. (2, 1);
   \draw[postaction={decorate},thick] (2, 1) .. controls(3,1.8) .. (4, 2); 
   \draw (0,0) node[below left] {$\alpha(0)$};
   \draw (2,1.2) node[below right] {$\alpha(1/2)$};
   \draw (4.1,2.4) node[below right] {$\alpha(1)$};
   \draw (1,0.4) node[above left] {$\alpha'$};
   \draw (3.1,1.7) node[above left] {$\alpha''$};
   \draw [fill=black] (0,0) circle (1.5pt);
   \draw [fill=black] (2,1) circle (1.5pt);
   \draw [fill=black] (4,2) circle (1.5pt);
   \end{scope}
   \end{tikzpicture}
    %\caption{An element of $P_3^G(X)$}
    \label{fig2}
    \caption{$\alpha=\alpha'\star\alpha''$}
\end{figure}
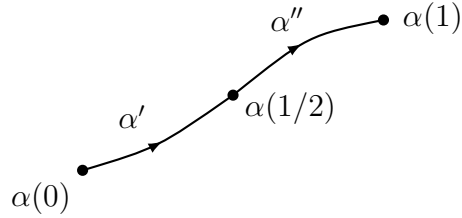
The first asserted equality then follows from the commutative triangle
$$
\xymatrix{P_2^G(X) \ar[r]^{\phi} \ar[d]_{e_2} & PX \times G \ar[dl]^{\varepsilon} \\ X \times X.}
$$
The second asserted equality comes from the commutative diagram
 $$\xymatrix{X \times G \ar[r]^{c \times 1} \ar[d]_{\Delta_G} & PX \times G \ar[dl]^{\varepsilon} \\ X \times X,}$$
where $c: X \xrightarrow{\simeq}PX$ stands for the homotopy equivalence sending $x \in X$ into the stationary path $\overline{x}$.
\end{proof}

Proposition~\ref{htpyinvariance} below is verified along the lines of~\cite[Theorem~3]{MR1957228} (cf.~\cite[Theorem~3.3]{BK}), while Proposition~\ref{groupnat} below is a mild generalization of~\cite[Lemma~3.2(1)]{BK}. Both properties are direct consequences of Definition~\ref{ladefetc}, so we leave the easy details as an exercise for the reader.

\begin{proposition}\label{htpyinvariance}
Let $f\colon X\to Y$ be a $G$-map with a (not necessarily equivariant) right homotopy inverse $g\colon Y\to X$, i.e., $f\circ g\simeq 1_Y$. Then $\TC^G_{\effv}(Y)\leq\TC^G_{\effv}(X)$. In particular $\TC^G_{\effv}$ is a $G$-homotopy invariant.
\end{proposition}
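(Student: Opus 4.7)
The plan is to exploit Proposition~\ref{caracterizaciones}, which identifies $\TC^G_{\effv}(Z)$ with $\secat(\varepsilon_Z)$ for the $G$-twisted evaluation $\varepsilon_Z\colon PZ\times G\to Z\times Z$, and to transport an optimal sectional cover for $\varepsilon_X$ across to one for $\varepsilon_Y$ along $g\times g\colon Y\times Y\to X\times X$. Set $k=\TC^G_{\effv}(X)$; since $\varepsilon_X$ is a fibration (Lemma~\ref{lemtwisted}), pick an open cover $U_0,\dots,U_k$ of $X\times X$ with strict sections $s_i\colon U_i\to PX\times G$ of $\varepsilon_X$, and let $V_i=(g\times g)^{-1}(U_i)$, which is an open cover of $Y\times Y$. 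Fix also a homotopy $H\colon Y\times I\to Y$ with $H_0=f\circ g$ and $H_1=1_Y$.

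The key construction is a section $t_i\colon V_i\to PY\times G$ of $\varepsilon_Y$. For $(y_0,y_1)\in V_i$, write $s_i(g(y_0),g(y_1))=(\alpha,h)$, so that $\alpha(0)=g(y_0)$ and $\alpha(1)\cdot h=g(y_1)$; the $G$-equivariance of $f$ then forces $f(\alpha(1))=fg(y_1)\cdot h^{-1}$. Define $t_i(y_0,y_1)=(\beta,h)$, where $\beta$ is the concatenation in $Y$ of the three paths $r\mapsto H(y_0,1-r)$, $r\mapsto f(\alpha(r))$, and $r\mapsto H(y_1,r)\cdot h^{-1}$, running respectively from $y_0$ to $fg(y_0)$, from $fg(y_0)$ to $fg(y_1)\cdot h^{-1}$, and from $fg(y_1)\cdot h^{-1}$ to $y_1\cdot h^{-1}$. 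The endpoints match along the way, one has $\beta(0)=y_0$ and $\beta(1)\cdot h=y_1$, so $\varepsilon_Y\circ t_i$ equals the inclusion $V_i\hookrightarrow Y\times Y$. Continuity of $t_i$ is inherited from that of $s_i$, $f$, $H$, the $G$-action, and path concatenation. Consequently $\{(V_i,t_i)\}_{i=0}^{k}$ witnesses $\secat(\varepsilon_Y)\leq k$, proving the inequality.

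The $G$-homotopy invariance follows formally: a $G$-homotopy equivalence comes with a $G$-equivariant homotopy inverse, and each of the two maps is itself a $G$-map with a right homotopy inverse, so the inequality applied in both directions yields equality. The only real content is the three-piece formula for $\beta$: the $G$-equivariance of $f$ is invoked exactly once, to identify $f(\alpha(1))$ with $fg(y_1)\cdot h^{-1}$, and this identification is precisely what allows the third piece (obtained by applying the $G$-action of $h^{-1}$ to a slice of $H$) to absorb the residual translation without requiring equivariance of either $g$ or $H$. This pointwise absorption is the technical heart of the argument; once it is in place, everything else is routine verification.
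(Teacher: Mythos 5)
Your proof is correct and is essentially the argument the authors have in mind: it is the direct adaptation of Farber's homotopy-invariance proof for $\TC$ to the $G$-twisted evaluation map, with the single novel ingredient being that $G$-equivariance of $f$ lets you translate the slice $H(y_1,-)$ by $h^{-1}$ so the concatenation closes up at $y_1\cdot h^{-1}$. Using Proposition~\ref{caracterizaciones} to work with $\varepsilon\colon PX\times G\to X\times X$ instead of $e_2\colon P_2^G(X)\to X\times X$ is just a cosmetic shift (the two are identified by the homeomorphism $\phi$ in that proposition), and the rest of your bookkeeping — the endpoint matching, the role of $h^{-1}$, and the symmetric argument for $G$-homotopy equivalences — is exactly right.
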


\begin{proposition}\label{groupnat}
For a $G$-trivial space $X$, $\TC_{\effv}^{G}(X)$ agrees with $\TC(X)$, Farber's topological complexity of the underlying space $X$. Further, for a group morphism $\kappa\colon G_1\to G_2$ and a $G_2$-space $X$, we have $\TC_{\effv}^{G_2}(X)\leq\TC_{\effv}^{G_1}(X)$, where $X$ is seen as a $G_1$-space via $\kappa$. In particular, $\TC_{\effv}^{G}(X)\leq\TC(X)$.
\end{proposition}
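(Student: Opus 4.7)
The plan is to reduce everything to the characterization $\TC_{\effv}^{G}(X)=\secat(\Delta_G)$ of Proposition~\ref{caracterizaciones}, where $\Delta_G\colon X\times G\to X\times X$ is the $G$-saturated diagonal $(x,g)\mapsto(x,xg)$. All three assertions then become elementary functoriality statements about sectional categories of these maps as the acting group varies.

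For the first assertion, note that a trivial $G$-action satisfies $xg=x$, so $\Delta_G$ factors as $X\times G\xrightarrow{p_1}X\xrightarrow{\Delta}X\times X$, where $p_1$ is the projection and $\Delta$ is the ordinary diagonal. Because $p_1$ admits the continuous section $x\mapsto(x,e)$, homotopy local sections transfer freely between $\Delta$ and $\Delta_G$ over any open $U\subset X\times X$: given one for $\Delta$, postcompose it with $x\mapsto(x,e)$; given one for $\Delta_G$, postcompose it with $p_1$. Combined with the standard equality $\TC(X)=\secat(\Delta)$, this yields $\TC_{\effv}^{G}(X)=\secat(\Delta_G)=\secat(\Delta)=\TC(X)$.

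For the second assertion, the group morphism $\kappa\colon G_1\to G_2$ induces a continuous map $\widetilde{\kappa}\colon X\times G_1\to X\times G_2$, $(x,g_1)\mapsto(x,\kappa(g_1))$. Viewing $X$ as a $G_1$-space through $\kappa$ one obtains $\Delta_{G_1}(x,g_1)=(x,x\cdot\kappa(g_1))=\Delta_{G_2}(\widetilde{\kappa}(x,g_1))$, i.e., $\Delta_{G_1}=\Delta_{G_2}\circ\widetilde{\kappa}$. Postcomposing any homotopy local section of $\Delta_{G_1}$ over $U$ with $\widetilde{\kappa}$ produces one for $\Delta_{G_2}$ over $U$, giving $\TC_{\effv}^{G_2}(X)\leq\TC_{\effv}^{G_1}(X)$. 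The final ``in particular'' clause then follows by specializing to $G_1=\{e\}$, with $\kappa\colon\{e\}\to G$ the unique morphism: the induced $\{e\}$-action on $X$ is trivial, so the first assertion yields $\TC_{\effv}^{\{e\}}(X)=\TC(X)$ and hence $\TC_{\effv}^{G}(X)\leq\TC(X)$.

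The only subtlety to verify is that pre- or post-composition with a continuous map (respectively, with one admitting a continuous section) preserves the property of being a homotopy local section, which is immediate; this is why the authors leave the argument as an exercise.
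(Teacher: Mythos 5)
Your proof is correct. The paper leaves this as an exercise (``direct consequences of Definition~\ref{ladefetc}''), and your route through the characterization $\TC_{\effv}^{G}(X)=\secat(\Delta_G)$ of Proposition~\ref{caracterizaciones} is a clean, equivalent way to carry it out: the factorization $\Delta_G=\Delta\circ p_1$ for a trivial action and the factorization $\Delta_{G_1}=\Delta_{G_2}\circ\widetilde{\kappa}$ for a group morphism both correctly translate homotopy local sections in the required directions, and specializing $G_1=\{e\}$ gives the last clause. One could equally well argue directly with $e_2\colon P_2^G(X)\to X\times X$, sending $(\alpha_1,g_1,\alpha_2)\mapsto(\alpha_1,\kappa(g_1),\alpha_2)$ for the morphism statement and using $\phi$ for the trivial-action statement, but your version routed through $\Delta_G$ is arguably the tidier exposition.
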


Proposition~\ref{groupnat} and~(\ref{comparacion}) yield the inequalities $\TC^{G,k}(X)\leq\TC^{G}_{\effv}(X)\leq\TC(X)$ for any $k\geq2$ and general $G$-spaces. In particular,~\cite[Proposition~5.3]{BK} yields Corollary~\ref{nsuytsb} below, while~\cite[Proposition~5.9]{BK} and the paragraph following~(\ref{comparacion}) yield Corollary~\ref{hduiwydg} below. Other calculations in~\cite[Subsection~5.2]{BK} also give full information (gathered in Corollary~\ref{hhssyywoodd} below) in the $\TC_{\effv}^G$ realm.

\begin{corollary}\label{nsuytsb}
Let $p$ be a prime integer. Assume that $\mathbb{Z}_p$ acts cellularly on a positive dimensional sphere $S^n$. If $p=2$, assume in addition that the action preserves orientation. Then $$\TC^{\mathbb{Z}_p}_\effv(S^n)=\begin{cases}1, & \text{if $n$ is odd}; \\ 2, & \text{if $n$ is even}.\end{cases}$$
\end{corollary}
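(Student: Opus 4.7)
The plan is to pinch $\TC^{\mathbb{Z}_p}_{\effv}(S^n)$ via the two-sided sandwich
$\TC^{\mathbb{Z}_p,2}(S^n)\leq\TC^{\mathbb{Z}_p}_{\effv}(S^n)\leq\TC(S^n)$
that is flagged in the paragraph immediately preceding the corollary; both extremes of this sandwich are already known and will collapse onto the stated integer.

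For the upper bound, the final assertion of Proposition~\ref{groupnat} yields $\TC^{\mathbb{Z}_p}_{\effv}(S^n)\leq\TC(S^n)$, and Farber's classical calculation contributes $\TC(S^n)=1$ when $n$ is odd and $\TC(S^n)=2$ when $n$ is even. For the lower bound, specialize~(\ref{comparacion}) to $k=2$ to obtain $\TC^{\mathbb{Z}_p,2}(S^n)\leq\TC^{\mathbb{Z}_p}_{\effv}(S^n)$, and then invoke \cite[Proposition~5.3]{BK}: under precisely the hypotheses of the present statement (a cellular $\mathbb{Z}_p$-action on $S^n$, with the action preserving orientation when $p=2$), that proposition computes $\TC^{\mathbb{Z}_p,2}(S^n)$ as $1$ for $n$ odd and $2$ for $n$ even. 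The two bounds meet, forcing equality throughout.

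The only real subtlety is a bookkeeping check that the hypotheses in Corollary~\ref{nsuytsb} coincide with those under which \cite[Proposition~5.3]{BK} is established; once that match is verified, no further computation is needed, which is exactly why the authors have advertised the statement as a direct corollary of Proposition~\ref{groupnat} and~(\ref{comparacion}). In particular, there is no need to construct effectual motion planners on $S^n$ by hand, nor to reprove any cohomological lower bound: the work has been done at the two ends of the sandwich, and the middle term is pinned down by monotonicity alone.
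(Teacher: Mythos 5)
Your proposal matches the paper's own route: the authors obtain Corollary~\ref{nsuytsb} by pinning $\TC^{\mathbb{Z}_p}_{\effv}(S^n)$ between $\TC^{\mathbb{Z}_p,2}(S^n)$ (a lower bound via~(\ref{comparacion})) and $\TC(S^n)$ (an upper bound via Proposition~\ref{groupnat}), with the lower end supplied by \cite[Proposition~5.3]{BK} and the upper end by Farber's classical computation of $\TC(S^n)$. Your write-up is correct and simply spells out what the paper leaves implicit.
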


\begin{corollary}\label{hduiwydg}
Let $\mathbb{Z}_2$ be a cellular free action on a positive dimensional sphere $S^n$. Then $\TC^{\mathbb{Z}_2}_\effv(S^n)=1$.
\end{corollary}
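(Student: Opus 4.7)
The plan is to reduce the statement directly to Błaszczyk--Kaluba's computation, via the comparison recorded in the paragraph following~(\ref{comparacion}). The key observation is that any free cellular action of the finite discrete group $\mathbb{Z}_2$ on the Hausdorff space $S^n$ is principal: the image of the $\mathbb{Z}_2$-saturated diagonal $\Delta_G\colon S^n\times\mathbb{Z}_2\to S^n\times S^n$ is the disjoint union of the ordinary diagonal and the graph of the antipodal self-homeomorphism. Each is clopen in $\mathrm{Im}(\Delta_G)$, so the translation map $\tau\colon\mathrm{Im}(\Delta_G)\to\mathbb{Z}_2$ that picks out which orbit component a pair lies in is locally constant, hence continuous.

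With principality in hand, the paragraph following~(\ref{comparacion}) tells us that the forgetful projection $P\colon P_2^{\mathbb{Z}_2}(S^n)\to P^{\mathbb{Z}_2,2}(S^n)$ is a homeomorphism intertwining $e_2$ and $\varepsilon_2$, so their sectional categories coincide; in notation,
\begin{equation*}
\TC^{\mathbb{Z}_2}_{\effv}(S^n)\;=\;\TC^{\mathbb{Z}_2,2}(S^n).
\end{equation*}
Now~\cite[Proposition~5.9]{BK} computes $\TC^{\mathbb{Z}_2,2}(S^n)=1$ for every free cellular $\mathbb{Z}_2$-action on a positive dimensional sphere. Substituting into the displayed equality yields $\TC^{\mathbb{Z}_2}_{\effv}(S^n)=1$, as required.

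I do not foresee a real obstacle here; the content of the corollary is essentially packaging. The only step requiring any thought is the verification of principality, but for a free action of a finite discrete group on a Hausdorff space this is entirely routine via the clopen decomposition above. Everything else is a direct citation.
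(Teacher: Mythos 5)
Your argument is correct and coincides with the paper's own (implicit) proof: verify that the free cellular $\mathbb{Z}_2$-action is principal so that the paragraph following~(\ref{comparacion}) upgrades the inequality to an equality, and then cite \cite[Proposition~5.9]{BK}. One small slip of wording: the non-trivial element of $\mathbb{Z}_2$ need not act antipodally, so ``the graph of the antipodal self-homeomorphism'' should read ``the graph of the self-homeomorphism given by the non-identity element''; the clopen-decomposition argument itself is unaffected.
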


\begin{corollary}\label{hhssyywoodd}
Let $\mathbb{Z}_2$ act cellularly on $S^n$. Assume the action is orientation-reversing and that it has an $r$-dimensional fixed point set with $0\leq r\leq n-2$. Then $\TC^{\mathbb{Z}_2}_\effv(S^n)=1$ provided either $n$ is odd, or $n$ is even and the action is linear.
\end{corollary}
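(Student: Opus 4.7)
The plan is to combine the sandwich
\[
\TC^{G,k}(X)\leq\TC^{G}_{\effv}(X)\leq\TC(X)
\]
(coming from Proposition~\ref{groupnat} and~(\ref{comparacion})) with the explicit computations of~\cite[Subsection~5.2]{BK}. The lower bound $\TC^{\mathbb{Z}_2}_{\effv}(S^n)\geq 1$ I would obtain directly, via Proposition~\ref{caracterizaciones}: any homotopy section of $\Delta_G\colon S^n\times\mathbb{Z}_2\to S^n\times S^n$ would, by discreteness of $\mathbb{Z}_2$ and connectedness of $S^n\times S^n$, land in a single component $S^n\times\{g_0\}$ of the source, producing a continuous $f\colon S^n\times S^n\to S^n$ with $(f,f\cdot g_0)\simeq\mathrm{id}$. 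This forces $f$ to be homotopic to both coordinate projections (possibly after post-composing one of them with~$\sigma$), contradicting the linear independence of the classes $\mathrm{pr}_1^{*}$ and $\mathrm{pr}_2^{*}$ in $H^{n}(S^n\times S^n)$.

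For the upper bound, when $n$ is odd Proposition~\ref{groupnat} yields $\TC^{\mathbb{Z}_2}_{\effv}(S^n)\leq\TC(S^n)=1$ at once. When $n$ is even and the action is linear, $\TC(S^n)=2$ is too coarse, so I would exhibit an explicit 2-domain motion planner for the $G$-twisted evaluation~$\varepsilon$, closely following the construction in~\cite[Subsection~5.2]{BK}. Writing $\mathbb{R}^{n+1}=V_+\oplus V_-$ for the orthogonal decomposition associated with~$\sigma$, take $U_1=\{(x,y)\in S^n\times S^n:y\neq -x\}$ with the shortest-geodesic section and $g=e$, and let $U_2$ be a neighborhood of the antidiagonal on which one uses $g=\sigma$ together with the shortest geodesic from $x$ to $\sigma(y)=-\sigma(x)$ in the generic part, interpolated through a fixed unit vector $w\in V_-\setminus\{0\}$ near the ``bad'' locus $\{(x,-x):x\in F\}$, where $F=S^n\cap V_+\cong S^r$ is the fixed sphere. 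Such a $w$ exists because the hypothesis $r\leq n-2$ ensures $\dim V_-\geq 2$.

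The main obstacle is precisely this even linear case: one must verify that the piecewise formulas defining the section on $U_2$ glue into a single continuous section compatible with a single (constant) choice of $g\in\mathbb{Z}_2$, particularly near $\{(x,-x):x\in F\}$, where the shortest-geodesic formula degenerates. The saving grace is that $F\cong S^r$ has odd dimension~$r$ (forced by the orientation-reversing hypothesis combined with the parity of~$n$), so it admits a non-vanishing tangent vector field that, together with the normal reference $w$, assembles the required continuous interpolation. Once $U_1$ and $U_2$ are assembled in this way, $\TC^{\mathbb{Z}_2}_{\effv}(S^n)\leq 1$ follows, closing the argument.
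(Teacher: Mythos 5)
Your lower bound and your treatment of the $n$ odd case are both fine. Proving $\TC^{\mathbb{Z}_2}_{\effv}(S^n)\geq 1$ directly from Proposition~\ref{caracterizaciones} (any homotopy section of $\Delta_G$ lands in a single component $S^n\times\{g_0\}$ by connectedness, and then $f\simeq\mathrm{pr}_1$, $g_0\!\cdot\! f\simeq\mathrm{pr}_2$ is impossible because $\mathrm{pr}_1^*$ and $\mathrm{pr}_2^*$ are independent in $H^n(S^n\times S^n;\mathbb{Z})$ and $\sigma^*$ acts by $\pm1$) is a clean alternative to the paper's route, which instead invokes \cite[Lemma~5.5]{BK} together with~(\ref{comparacion}) to get $1\leq\TC^{\mathbb{Z}_2,2}(S^n)\leq\TC^{\mathbb{Z}_2}_{\effv}(S^n)$. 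For $n$ odd, $\TC^{\mathbb{Z}_2}_{\effv}(S^n)\leq\TC(S^n)=1$ is exactly as you say.

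The gap, as you yourself flag, is the $n$ even linear case, and it is a real one. The two naive domains on which shortest geodesics work with a locally constant group element are $U_1=\{(x,y):y\neq -x\}$ (use $g=e$) and $U_2'=\{(x,y):\sigma(y)\neq -x\}$ (use $g=\sigma$), and these miss precisely the locus $\{(x,-x):x\in F\}$ you identify. Extending the $U_2'$-section continuously over a neighborhood of that locus, while keeping $g\equiv\sigma$, is the entire technical content, and your sketch does not settle it: you first propose bending through a \emph{fixed} $w\in V_-$, then switch to a nowhere-zero tangent field on $F\cong S^r$; these are different constructions, neither is written down as an explicit formula, and no continuity check is given at the transition between the geodesic regime and the detour regime. (Note also that $r\leq n-2$ together with $n-r$ odd actually forces $\dim V_-\geq 3$, not just $\geq 2$, which is harmless but indicates the estimates were not pinned down.) The paper does not attempt to rebuild the planner: it observes that the explicit $2$-ruled motion planner of \cite[Proposition~5.6]{BK}, which already realizes $\TC^{\mathbb{Z}_2,2}(S^n)=1$, is in fact ``effective in our sense'' (i.e., its leaping data can be chosen continuously/locally constantly), and therefore gives $\TC^{\mathbb{Z}_2}_{\effv}(S^n)\leq 1$. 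You should either cite and inspect that construction as the paper does, or supply a genuinely explicit formula for the section on $U_2$ near $\{(x,-x):x\in F\}$ together with a continuity verification; as written, the even linear case is asserted rather than proved.
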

\begin{proof}
Proposition~\ref{groupnat} and~\cite[Lemma~5.5]{BK} give $1\leq\TC^{\mathbb{Z}_2,2}(S^n)\leq\TC^{\mathbb{Z}_2}_\effv(S^n)$. The equality $1=\TC^{\mathbb{Z}_2,2}(S^n)$ is obtained in~\cite[Proposition~5.6]{BK} by constructing an explicit 2-ruled motion planner which is effective in their sense. Examination of that construction reveals that the motion planner is actually effective in our sense. The result follows.
\end{proof}

\section{Effectual topological complexity}\label{seceffel}
As explained in~\cite{BK}, the goal in the effective-$\TC$ viewpoint is to motion-plan an autonomous system by taking advantage of potential symmetries. Explicitly, we aim at instructing the system to move from any given initial state to any desired final state, contenting ourselves to arrive at a state that is only $G$-symmetric to the intended destination state. The purpose of this section is to show that the latter task has a completely different nature than the task of motion planning from any given initial state to a desired final \emph{$G$-orbit}. The resulting concept, which we call the \emph{effectual} topological complexity of the system (with respect to the given symmetries), turns out to be a connecting link in the relationship between the effective topological complexity of a $G$-space $X$ and the usual topological complexity of the orbit space $X/G$. As we will see, such a relationship is rather subtle even for an orientable surface endowed with its standard antipodal involution.

\begin{definition}{\em
The effectual topological complexity (ETC) of a $G$-space $X$, denoted by $\TC_{\effl}^G(X)$, is the sectional category of the map $\epsilon: PX \rightarrow X \times (X/G)$ given by the composite $$PX \xrightarrow{e_{01}} X \times X \xrightarrow{1 \times \pi} X \times (X/G),$$ where $\pi$ stands for the canonical projection, and $e_{01}$ is the end-points evaluation map. 
}\end{definition}

 \begin{remark}\label{ANR}{\em
 As shown in~\cite[Lemma~4.1 and Corollary~4.2]{Pavesic}, $\epsilon$ is a fibration if and only if $\pi$ is so, in which case $\TC_{\effl}^G(X)$ is nothing but Pave\v{s}i\'c's  topological complexity of~$\pi$. It will also be convenient to record that, when $\pi$ is a fibration and both $X$ and $X/G$ are compact metric ANR's, the openess requirement for coverings of $X \times (X/G)$ can be waved from the definition of $\TC_{\effl}^G(X)$ (cf.~\cite[Theorem 4.6]{Pavesic}). In such a case, an effectual motion planner for $X$ is a partition of $X\times(X/G)$ by subsets $D_i$ (called the \emph{effectual domains}) together with a family of continuous sections $s_i\colon D_i\to PX$ (called the \emph{effectual instructions}) for the restricted fibrations $\epsilon_i\colon \epsilon^{-1}(D_i)\to D_i$. The effectual motion planner is said to be \emph{optimal} if it has $\TC_{\effl}^G(X)+1$ effectual domains. 
}\end{remark}

In this paper we are mainly interested in free $G$-spaces $X$ for which $\pi: X \rightarrow X/G$ is a covering projection. So, throughout the rest of the paper we assume that $X$ is Hausdorff and that the action of $G$ on $X$ is properly discontinuous (and thus principal, see~\cite[Lemma~14.1.1]{MR2456045}). In particular, $G$ is assumed to be discrete. In such cases,~\cite{Pavesic} gives a thorough study of the basic homotopy properties of ETC. We thus focus on its connections to other TC-invariants:

 \begin{proposition}\label{bothsquares}
 Both squares in the commutative diagram
 \begin{equation}\label{doscuadros}
 \xymatrix{P_2^G(X) \ar[r]^{\;\;q} \ar[d]_{e_2} & PX \ar[r]^{P\pi\;} \ar[d]^{\epsilon} & P(X/G) \ar[d]^{e_{01}} \\ X \times X \ar[r]_{1\times \pi\;\;\;\;\;\;\;} & X \times (X/G) \ar[r]_{\pi \times 1\;\;\;\;\;\;} & (X/G) \times (X/G)}
 \end{equation}
 are strict pullbacks. Here $q(\alpha, g, \beta)=\alpha \star(\beta g^{-1})$. In particular (\ref{sandwich}) holds.
 \end{proposition}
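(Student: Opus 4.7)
The plan is to verify each square commutes, exhibit set-theoretic bijections between $P_2^G(X)$ (resp.\ $PX$) and the pullback object, and then upgrade these bijections to homeomorphisms using the principality of the action on the left and path lifting for covering maps on the right. The sandwich (\ref{sandwich}) will follow from the elementary fact that sectional category is nonincreasing under pullback, since a (homotopy) local section of a map restricts to one of any pullback of that map.

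For the right square, commutativity is immediate from $(\pi\times 1)\circ\epsilon(\gamma)=([\gamma(0)],[\gamma(1)])=e_{01}(\pi\gamma)$. I would introduce the comparison map $PX\to(X\times(X/G))\times_{(X/G)\times(X/G)}P(X/G)$ by $\gamma\mapsto((\gamma(0),[\gamma(1)]),\pi\gamma)$, and invert it by sending a datum $((x,[y]),\gamma')$ with $\pi(x)=\gamma'(0)$ and $\gamma'(1)=[y]$ to the unique lift $\tilde\gamma'_x\in PX$ of $\gamma'$ starting at $x$. Since $X$ is Hausdorff and $G$ acts properly discontinuously, $\pi$ is a covering projection, so path lifting depends continuously on the base path and the initial point in the compact-open topology, upgrading the bijection to a homeomorphism.

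For the left square, I would send $(\alpha,g,\beta)\in P_2^G(X)$ to $((\alpha(0),\beta(1)),\alpha\star(\beta g^{-1}))$; commutativity follows from $[\beta(1)g^{-1}]=[\beta(1)]$. To invert, given a datum $((x,y),\gamma)$ with $\gamma(0)=x$ and $[y]=[\gamma(1)]$, I would extract the unique $g\in G$ satisfying $\gamma(1)\cdot g=y$ via the continuous map $\tau$ appearing in the proof of Lemma~\ref{lemtwisted} (the action is principal by~\cite[Lemma~14.1.1]{MR2456045}), halve $\gamma$ at $1/2$, and translate the second half by $g$ to recover the triple. Continuity of $\tau$, of halving, of the action, and of concatenation then produces the desired homeomorphism.

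With both squares identified as strict pullbacks, applying the $\secat$-pullback monotonicity to each in turn yields $\secat(e_2)\leq\secat(\epsilon)\leq\secat(e_{01})$, which by Proposition~\ref{caracterizaciones} and the definition of $\TC(X/G)$ is precisely $\TC_{\effv}^{G}(X)\leq\TC_{\effl}^G(X)\leq\TC(X/G)$. The one genuinely delicate point, as anticipated, is the continuity (rather than mere bijectivity) of the two inverse maps: this is exactly what the principality of the action and the covering-space structure of $\pi$ have been set up to deliver.
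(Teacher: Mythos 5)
Your proposal is correct and runs parallel to the paper's proof, but there is a meaningful difference in how the left square is handled, worth noting. For the right square the two arguments coincide: both reduce to the continuity of the path-lifting function for the covering projection $\pi$ (the paper cites \cite[Theorem~II.7.8]{MR0210112} and factors $\psi^{-1}$ through a lifting map $\ell$ on an intermediate pullback $Q$; your ``unique lift depending continuously on base path and initial point'' is exactly that). For the left square, you directly invoke continuity of the translation map $\tau\colon\Im(\Delta_G)\to G$ (principality, guaranteed by \cite[Lemma~14.1.1]{MR2456045}), so that the inverse is the explicit continuous composite $((x,y),\gamma)\mapsto\bigl(\gamma',\,\tau(\gamma(1),y),\,\gamma''\cdot\tau(\gamma(1),y)\bigr)$. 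The paper instead first simplifies via the homeomorphism $\phi^{-1}\colon PX\times G\to P_2^G(X)$ and then shows by hand that the pullback $P$ carries the disjoint-union topology $\bigsqcup_{g\in G}I_g$, verifying $\overline{I_{g_1}}\cap I_{g_2}=\varnothing$ from the proper discontinuity of the action; this is in effect a self-contained re-derivation of the continuity your $\tau$ packages abstractly. Your version is shorter and more modular; the paper's is more elementary in that it never leaves the specific pullback. Two small inaccuracies in your write-up: $\tau$ appears in Remark~\ref{agreement}, not in the proof of Lemma~\ref{lemtwisted}; and for the leftmost inequality of (\ref{sandwich}) you do not need Proposition~\ref{caracterizaciones} at all, since $\TC_{\effv}^{G}(X)=\TC_{\effv}^{G,2}(X)=\secat(e_2)$ holds by Definition~\ref{ladefetc}.
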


 \begin{proof}
Consider the commutative diagram  
  \begin{center}
  \begin{tikzcd}
    PX \times G\ar[r,"\phi^{-1}"]\ar[rd,"\varepsilon"'] & P_2^GX\ar[r,"q"]\ar[d,"e_2"] & PX\ar[d,"\epsilon"] &
  \\ 
   & X\times X\ar[r,"1\times \pi"'] & X\times (X/G) & 
  \end{tikzcd}
  \end{center}
where $\phi$ is the homeomorphism in the proof of Proposition~\ref{caracterizaciones}, so that the top horizontal composite is projection onto the the first coordinate. Let $P$ be the pullback of $1 \times \pi$ and $\epsilon$. Note that the canonical map $\varphi: PX \times G \rightarrow P$ is surjective: given $(x,y, \gamma) \in P$, so that $x=\gamma(0)$ and $[y]=[\gamma(1)]$, say $y=\gamma(1)\cdot g$ with $g \in G$, we have $\varphi(\gamma, g)=(x,y, \gamma)$. Since $X$ is $G$-free, $\varphi$ is injective. Thus the assertion for the left hand-side square in~(\ref{doscuadros}) will follow once we show that $\varphi: PX \times G \rightarrow P$ is a topological embedding. 

Since $G$ is discrete, $PX\times G = \bigsqcup_{g \in G} PX \times \{g\}$ has the disjoint-union topology. On the other hand, for each $g \in G$, the commutative diagram 
$$  \begin{tikzcd}
    PX\times\{g\}\ar[r,hook] \ar[rrrd,equal] & PX\times G\ar[r,"\varphi"] & P\ar[r,hook] & X\times X \times PX\ar[d,"\pi_3"] \\ & & & PX,
  \end{tikzcd}
$$
where $\pi_3$ is the projection onto the third coordinate, shows that the restriction of $\varphi$ to $PX \times \{g\}$ is a topological embedding. Consequently, it is enough to check that $P$ has the disjoint-union topology $\bigsqcup_{g \in G} I_g$, where $I_g$ is the image of $PX \times \{g\}$ under~$\varphi$. We argue in fact that, in $X \times X \times PX$, $\overline{I_{g_1}} \cap I_{g_2} = \varnothing$ for $g_1 \neq g_2$: Assume there is an element $(\gamma(0), \gamma(1)\cdot g_2, \gamma) \in \overline{I_{g_1}} \cap I_{g_2}$, and take a neighborhood $W$ of $\gamma(1)$ such that $W{\cdot\,} g \,\cap\, W = \varnothing$ whenever $g \neq e$. Since $X \times (W{\cdot\,} g_2) \times [\{1\}, W]$ is a neighborhood of $(\gamma(0), \gamma(1)\cdot g_2, \gamma)$ in $X\times X\times PX$, there exists and element $(\alpha(0), \alpha(1)\cdot g_1, \alpha) \in (X \times (W{\cdot\,}g_2) \times [\{1\}, W]) \cap I_{g_1}$. Therefore $\alpha(1)\cdot g_1 \in W {\cdot\,} g_2$ and $\alpha(1) \in W$, which yields $\alpha(1)\cdot g_1 \in W{\cdot\,}g_1 \cap W{\cdot\,}g_2$, and so $g_1=g_2$.
 
We now deal with the right hand-side square in~(\ref{doscuadros}), namely, the exterior commutative square in the diagram
$$
  \begin{tikzcd}
    PX\ar[rd,"\psi"]\ar[rrd,bend left,"P\pi"']\ar[rdd,bend right,"\epsilon"'] & &  \\
     & B\ar[r]\ar[d] & P(X/G)\ar[d,"e_{01}"] \\
   & X\times (X/G)\ar[r,"\pi\times 1"'] & (X/G)\times (X/G),
  \end{tikzcd}
$$
where $B$ is the pullback of $\pi \times 1$ and $e_{01}$. The canonical map $\psi:PX \rightarrow B$ is one-to-one because $\pi$ is a covering map, so we only need to check continuity of $\psi^{-1}$. Consider the commutative diagram with pullback square
  \begin{center}
  \begin{tikzcd}
    PX\ar[rd,]\ar[rrd,bend left,"P\pi"]\ar[rdd,bend right,"e_0"'] & & \\
     & Q\ar[r]\ar[lu,bend left,"\ell"]\ar[d] & P(X/G)\ar[d,"e_0"] \\
     & X\ar[r,"\pi"'] & X/G 
    \end{tikzcd}
  \end{center}
 where the lifting $\ell$ for the canonical map $PX\to Q$ is continuous in view of~\cite[Theorem~II.7.8]{MR0210112} (recall $\pi$ has been assumed to be a covering projection). In these conditions, $\psi^{-1}$ is continuous as it factors as the composite of the two bottom horizontal maps in the commutative diagram
  \begin{center}
  \begin{tikzcd}
    X\times X/G\times P(X/G)\ar[r,"\pi_{13}"] & X\times P(X/G) & \\
     B\ar[r]\ar[u,hook] & Q\ar[r,"\ell"]\ar[u,hook] & PX,
  \end{tikzcd}
  \end{center}
 where $\pi_{13}$ projects onto the the first and third coordinates.  
 \end{proof}

 \begin{remark}{\em
Pave\v{s}i\'c shows in \cite{Pavesic} that the right hand-side square in~(\ref{doscuadros}) is a homotopic pullback whenever $\pi : X \rightarrow X/G$ is a fibration.
}\end{remark}

%\red{For future reference, we record:
%\begin{corollary}\label{referenciafutura}
%There is a homotopy equivalence $\Omega(X/G) \simeq (\Omega X) \times G$.
%\end{corollary}
%\begin{remark}{\em
%The above homotopy equivalence does not have to be compatible with 
%the obvious $H$-spaces structures. For example, in the double covering 
%$\T \rightarrow \K$ of the Klein bottle by the torus we note that 
%$\pi_0(\Omega \T) \times \Z_2$ is abelian, while  $\pi_0\Omega(\K)$ is not. 
%}\end{remark}}

\section{Torus}\label{sectoro}
We now deal with the middle inequality in~(\ref{cfto}), i.e.:

\begin{theorem}\label{T}
$\TC_{\effl}^{\Z_2}(\T)=3$.
\end{theorem}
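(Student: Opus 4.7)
The strategy is to prove $\TC_{\effl}^{\Z_2}(\T)=3$ by sandwiching: since $\TC_{\effv}^{\Z_2}(\T)=2$ and $\TC(\T/\Z_2)=4$ are already recorded, (\ref{sandwich}) gives $2\leq\TC_{\effl}^{\Z_2}(\T)\leq 4$, so the plan is to rule out both extremes.

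For the \emph{lower bound} $\TC_{\effl}^{\Z_2}(\T)\geq 3$, I would apply the zero-divisor cup-length estimate to $\epsilon$ with $\mathbb{Z}_2$-coefficients. Since $P\T$ deformation retracts onto $\T$ via constant paths, $\epsilon$ is homotopic to the graph map $x\mapsto(x,\pi(x))$, so $\epsilon^{*}(\alpha\otimes\beta)=\alpha\smile\pi^{*}(\beta)$ on cohomology. Write $\pi_1(\T/\Z_2)=\langle\alpha,\beta\mid\alpha\beta\alpha^{-1}\beta\rangle$ and identify $\pi_1(\T)$ with the orientation-preserving subgroup $\langle\alpha^{2},\beta\rangle$; the $\mathbb{Z}_2$-duals $A,B\in H^{1}(\T/\Z_2;\mathbb{Z}_2)$ of $\alpha,\beta$ then satisfy $\pi^{*}(A)=0$ and $\pi^{*}(B)=e_{2}$, where $e_{1},e_{2}$ are the standard generators of $H^{1}(\T;\mathbb{Z}_2)$. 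Since $\alpha$ is the orientation-reversing loop, $w_{1}(\T/\Z_2)=A$, and Wu's formula $Sq^{1}(x)=x\smile w_{1}$ on a closed surface gives $B^{2}=AB$; on the other hand $A$ has an integral lift, so $A^{2}=Sq^{1}(A)=0$. Hence
$$H^{*}(\T/\Z_2;\mathbb{Z}_2)\;=\;\mathbb{Z}_2[A,B]/(A^{2},\;B^{2}+AB),$$
with top class $\omega:=AB=B^{2}$. The classes $z:=e_{2}\otimes 1+1\otimes B$ and $z':=e_{1}e_{2}\otimes 1+e_{1}\otimes B$ are zero-divisors for $\epsilon^{*}$; a short calculation gives $z\smile z=1\otimes B^{2}=1\otimes\omega$, and then
$$z\smile z\smile z'\;=\;(1\otimes\omega)\smile(e_{1}e_{2}\otimes 1+e_{1}\otimes B)\;=\;e_{1}e_{2}\otimes\omega$$
because $B^{3}\in H^{3}(\T/\Z_2;\mathbb{Z}_2)=0$. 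Since $e_{1}e_{2}\otimes\omega$ generates $H^{4}(\T\times\T/\Z_2;\mathbb{Z}_2)$, this forces $\secat(\epsilon)\geq 3$.

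For the \emph{upper bound} $\TC_{\effl}^{\Z_2}(\T)\leq 3$, I would construct an explicit $4$-domain effectual motion planner on $\T\times\T/\Z_2$. Parametrize $\T=\R^{2}/\Z^{2}$ with the involution $\tau(s,t)=(s+\tfrac12,-t)$. The two building blocks are a $3$-domain motion planner on $\T$ obtained coordinatewise from the two $2$-domain $S^{1}$-planners (since $\TC(\T)\leq\TC(S^{1})+\TC(S^{1})=2$), and an open cover $\T/\Z_2=V_{0}\cup V_{1}$ by two cylindrical open sets each admitting a continuous section $\lambda_{i}\colon V_{i}\to\T$ of $\pi$, e.g.\ obtained by removing the image of the circles $\{t=0,\tfrac12\}$, respectively $\{t=\tfrac14,\tfrac34\}$, which in $\T$ are disjoint. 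On each $\T\times V_{i}$ the assignment $(x,[y])\mapsto(x,\lambda_{i}([y]))$ converts effectual sections into ordinary sections of $e_{01}\colon P\T\to\T\times\T$, giving a priori $2\cdot 3=6$ domains. The plan is to fuse these into $4$ by using the overlap $V_{0}\cap V_{1}$ together with the translation-by-$(\tfrac12,0)$ symmetry relating the two lifts $\lambda_{0},\lambda_{1}$, so that appropriately chosen pairs of the six candidate domains can be joined across the overlap into a single continuous section.

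The principal obstacle is precisely this last fusion step, because the bisector in $\T$ between a point $y$ and $\tau(y)$ must be controlled compatibly with the partition coming from the $\T$-motion planner; executing this requires a careful coordinate analysis on $\R^{2}/\Z^{2}$, analogous in spirit to (but considerably lighter than) the obstruction-theoretic work of Cohen-Vandembroucq that underlies $\TC(\T/\Z_2)=4$. By contrast, the lower bound reduces to the purely mechanical cup-product computation above once $\pi^{*}$ has been identified on the level of fundamental groups.
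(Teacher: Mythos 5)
Your lower bound is correct and, modulo a change of basis in $H^1(\T/\Z_2;\mathbb{Z}_2)$, is essentially the same cup-length computation as the paper's; the paper uses a single zero-divisor $(\alpha+\beta)\otimes 1+1\otimes\lambda$ cubed, whereas you use $z^2 z'$ with two related zero-divisors, but both produce the top class of $H^4(\T\times\K;\mathbb{Z}_2)$ from three elements of $\ker\epsilon^*$. Either route is clean and correct.

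The upper bound, however, is where the theorem's content lies, and there you have a genuine gap. You observe (correctly) that covering $\K=\T/\Z_2$ by two sets $V_0,V_1$ carrying sections of $\pi$, and pulling back an optimal $3$-domain planner on $\T\times\T$, gives an a priori bound of $2\cdot 3=6$ domains; but the step that would take this down to $4$ — ``fusing'' appropriate pairs of the six candidate domains across $V_0\cap V_1$ — is precisely the hard part, and you explicitly acknowledge that you have not carried it out. There is no general principle that lets you collapse a $6$-domain construction arising from a product of sectional categories to a $4$-domain one; whether such a fusion exists, and how to make the bisector-type matching compatible across the overlap, is exactly the nontrivial content you would need to supply. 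The paper's proof does not go this way at all: it abandons the lift-and-pullback scheme and instead directly constructs an explicit partition of $\T\times\K$ into four pieces $D_1,\dots,D_4$ adapted to the geometry of the antipodal involution on the embedded torus (half handles $M_x$, the arcs $C_x,C_x^I,C_x^D$, the distinguished points $a_x,b_x$, and the two involution-invariant circles $\A,\B$), then exhibits a continuous section of $\epsilon$ on each piece, using the ANR/compact-metric hypothesis to allow non-open domains. Until you execute the fusion (or replace it with a direct $4$-domain construction as in the paper), your proof of $\TC_{\effl}^{\Z_2}(\T)\le 3$ is only a plan, and the theorem is not established.
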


Before proving this fact, we discuss a couple of important consequences.
First of all, at the end of~\cite{Pavesic}, Pave\v{s}i\'c suggests the posibility that 
$\TC_{\effl}^{G}(X)=\TC(X)$ for every finitely sheeted covering projection $\pi: X \rightarrow X / G$. Theorem~\ref{T} gives an explicit counterexample to such a situation. Secondly, recall that the calculation of the topological complexity of the Klein bottle $\K$ in \cite{CV} amounts to showing the non-vanishing of the mod-2 reduction of $\nu_{\K}^{\TC}$, the primary (and unique) obstruction responsible for the equality $\TC(\K)=4$. In view of the functoriality of primary obstructions (\cite[Theorem~VI.6.3]{MR516508}), Theorem~\ref{T} and the pullback diagram 
\begin{center}    
    \begin{tikzpicture}[commutative diagrams/every diagram]
    \matrix[matrix of math nodes, name=m, commutative diagrams/every cell] 
    {P\T & P\K\\
    \T \times \K & \K \times \K\\};
    \path[commutative diagrams/.cd, ->, every label, >=latex]
    (m-1-1) edge node [above]{$P\pi$}(m-1-2)
    (m-1-1) edge node [left]{$\epsilon$}(m-2-1)
    (m-1-2) edge node [right]{$e_{01}$} (m-2-2)
    (m-2-1) edge node[below] {$\pi \times 1$} (m-2-2);
    \end{tikzpicture}
\end{center}
show that $\nu_{\K}^{\TC}$ maps trivially under the map $\pi \times 1\colon \T \times \K \longrightarrow \K \times \K$. These observations lead to the following retrospective explanation of Cohen-Vandembroucq's successful mod-2 calculations.

\begin{corollary}\label{sccssfulcv}
Cohen-Vandembroucq's non-trivial obstruction $\nu_{\K}^{\TC}$ is 2-torsion.
\end{corollary}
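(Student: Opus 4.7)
The plan is to combine the pullback structure in Proposition \ref{bothsquares} with the standard transfer trick for double coverings. First, I would argue that $\TC_{\effl}^{\Z_2}(\T)=3$ forces the primary obstruction for the fibration $\epsilon\colon P\T\to\T\times\K$ to vanish: since $\T\times\K$ is a 4-dimensional CW complex and the ETC is 3, a sectional covering over the 3-skeleton extends to all of $\T\times\K$, and the only obstruction that could be non-trivial lives in top dimension. Call this obstruction $\nu_{\T}^{\epsilon}$.

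Next, I would invoke the naturality of primary obstructions (\cite[Theorem~VI.6.3]{MR516508}) applied to the right-hand pullback square in~(\ref{doscuadros}). Since the fibre of $\epsilon$ is identified with the fibre of $e_{01}\colon P\K\to\K\times\K$ (namely $\Omega\K$) via the pullback, and the associated local coefficient system on $\T\times\K$ is the pullback of the corresponding local system on $\K\times\K$, functoriality gives
\begin{equation*}
(\pi\times 1)^{\ast}\,\nu_{\K}^{\TC}=\nu_{\T}^{\epsilon}=0.
\end{equation*}

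With this vanishing in hand, the last step is to apply the cohomological transfer. Because $\pi\colon\T\to\K$ is a double cover, so is $\pi\times 1\colon\T\times\K\to\K\times\K$, and the transfer homomorphism $\tau$ (with the relevant twisted coefficients) satisfies $\tau\circ(\pi\times 1)^{\ast}$ equals multiplication by $2$. Hence
\begin{equation*}
2\,\nu_{\K}^{\TC}=\tau\bigl((\pi\times 1)^{\ast}\nu_{\K}^{\TC}\bigr)=\tau(0)=0,
\end{equation*}
which is the desired 2-torsion conclusion.

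The step I expect to need the most care is the naturality of primary obstructions in the twisted setting: I have to verify that the local coefficient system governing $\nu_{\K}^{\TC}$ pulls back along $\pi\times 1$ to the one governing $\nu_{\T}^{\epsilon}$, and that the transfer is available for these coefficients. Both facts follow from the pullback identification of fibres in~(\ref{doscuadros}) and the general construction of the transfer for finite regular coverings, but writing this down precisely (especially matching Cohen-Vandembroucq's specific coefficient conventions in~\cite{CV}) is the only non-formal part of the argument.
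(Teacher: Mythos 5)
Your proof is correct, and the overall strategy coincides with the paper's: both establish $(\pi\times 1)^{*}\nu_{\K}^{\TC}=0$ from $\TC_{\effl}^{\Z_2}(\T)=3$ via functoriality of the primary obstruction applied to the right-hand pullback square in~(\ref{doscuadros}), and both then produce a ``multiplication by $2$'' to conclude $2\nu_{\K}^{\TC}=0$. The difference is entirely in how that last factor of $2$ is obtained. You invoke the cohomological transfer $\tau$ for the double cover $\pi\times 1$, using the identity $\tau\circ(\pi\times 1)^{*}=\times 2$, which is available here because the relevant local system $I^{\otimes 4}$ on $\T\times\K$ is pulled back from $\K\times\K$. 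The paper instead works on the Poincar\'e-dual side: it identifies the orientation module $\overline{\Z}$ of $\T\times\K$ as the pullback of $\widetilde{\Z}$, computes by an explicit group-cohomology calculation that $(\pi\times 1)_{*}\colon H_{4}(\T\times\K;\overline{\Z})\to H_{4}(\K\times\K;\widetilde{\Z})$ sends $[\T\times\K]$ to $-2[\K\times\K]$, and then applies the projection formula $(\pi\times1)_*\bigl((\pi\times1)^*(\nu)\cap[\T\times\K]\bigr)=\nu\cap(\pi\times1)_*[\T\times\K]$ together with Poincar\'e duality. The two routes are, of course, essentially dual to one another (the transfer for a finite cover of closed manifolds is Poincar\'e dual to the pushforward on twisted fundamental classes), so neither is more powerful; your version is shorter because it outsources the sign and degree bookkeeping to the general transfer machinery, while the paper's version makes that bookkeeping explicit via the generators of $\pi_1(\T)$ and $\pi_1(\K)$ and their action on the orientation modules. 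One small point of care, which you already flag: your opening sentence phrases the vanishing of $\nu_{\T}^{\epsilon}$ in terms of ``a sectional covering over the 3-skeleton extending,'' but the cleaner framing (and the one implicit in the paper's appeal to~\cite[Theorem~VI.6.3]{MR516508}) is that $\nu_{\T}^{\epsilon}$ is the single obstruction, living in $H^4$ of the $4$-manifold $\T\times\K$, to $\TC_{\effl}^{\Z_2}(\T)\leq 3$, so it vanishes by Theorem~\ref{T}; either way the conclusion $(\pi\times1)^*\nu_{\K}^{\TC}=\nu_{\T}^{\epsilon}=0$ is what is needed.
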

\begin{proof}
We recall a few preliminary facts from~\cite{CF}. Let $I$ stand for the augmentation ideal of the fundamental group $\pi_1(\K)$, i.e., $I$ is the kernel of the augmentation map $\Z\left[\pi_{1}(\K)\right] \to \Z$. The fundamental group $\pi_{1}(\K \times \K)=\pi_{1}(\K) \times \pi_{1}(\K)$ acts on $\Z[\pi_1(\K)]$ and, by restriction, on $I$ via the formula
$$(a, b) \cdot \Sigma n_{i} c_{i}=\sum n_{i}\left(a c_{i} b^{-1}\right).$$ 
Lastly, the obstruction $\nu_{\K}^{\TC}$ lies in the twisted cohomology group $H^{4}\left(\K \times \K; I^{\otimes 4}\right)$.
Cohen and Vandembrouq assess $\nu_{\K}^{\TC}$ through its Poincar\'e-dual image
\begin{equation*}
    \begin{array}{rccl}
    & H^{4}\left(\K \times \K; I^{\otimes 4}\right) & \longrightarrow & H_0\left(\K \times \K; I^{\otimes 4} \otimes \;\widetilde{\Z}\right) \\
    & \nu_{\K}^{\TC} & \mapsto & \nu_{\K}^{\TC} \cap [\K \times \K],
\end{array}  
\end{equation*}
where $\widetilde{\Z}$ stands for the orientation module of $\K \times \K$, and $[\K \times \K]$ is the corresponding twisted fundamental class---a generator of $H_{4}(\K \times \K; \widetilde{\Z}) \cong \mathbb{Z}$. A similar obstruction-theory setting holds when the fibration $e_{01}: P \K \rightarrow \K \times \K$ is replaced by its pullback fibration $\epsilon: P \T \longrightarrow  \T\times \K$. In particular, we highlight that the orientation module $\overline{\Z}$ of $\T \times \K$ 
is the pull back of $\widetilde{\Z}$ under $\pi \times 1: \T \times \K \rightarrow \K \times \K$.

As a last preliminary ingredient in the proof, consider the group presentations
$$\pi_{1}(\T)=\left\langle a, b: a b=b a\right\rangle
\quad \text{ and } \quad
\pi_{1}(\K)=\left\langle x, y: y x y=x\right\rangle,$$
with generators chosen so that the covering
$\pi: \T \to \K$ has $\pi_{*}(a)=x^{2}$ and $\pi_{*}(b)=y$ (see Figure~\ref{2c}).
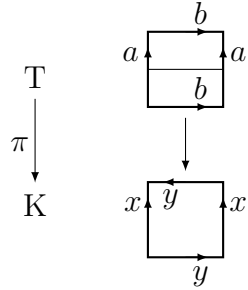
\begin{figure}[h]
\begin{center}
\begin{tikzpicture}[scale=1]  
\draw[thick] (0,0) rectangle (1,1);
\draw[thick] (0,-2) rectangle (1,-1);
%etiquetas de T
\node (a) at (-0.23, 0.7) {$a$};
\node (a) at (1.2, 0.7) {$a$};
\node (b) at (0.7, 0.25) {$b$};
\node (b) at (0.7, 1.25) {$b$};
%etiquetas de K
\node (x) at (-0.2, -1.3) {$x$};
\node (x) at (1.2, -1.3) {$x$};
\node (y) at (0.3, -1.23) {$y$};
\node (y) at (0.7, -2.25) {$y$};
%nodos y flechita entre los cuadrados
\node (A) at (0.5, 0) {};
\node (B) at (0.5, -1) {};
\draw[->, >=latex] (A) edge (B);
%nodos y flechitas del covering
\node (C) at (-1.5, .4) {$\T$};
\node (D) at (-1.5, -1.3) {$\K$};
\draw[->, >=latex] (C) edge (D);
%nodos para el covering
\node (F) at (-1.7, -0.5) {$\pi$};
%flechitas horizontales
\draw[->, >=latex] (0.5,0) -- (0.8,0);
\draw[->, >=latex] (0.5,1) -- (0.8, 1);
\draw[->, >=latex] (0.5,-1) -- (0.2,-1);
\draw[->, >=latex] (0.5,-2) -- (0.8, -2);
%flechitas verticales
\draw[->, >=latex] (0, 0.5) -- (0, 0.8);
\draw[->, >=latex] (1, 0.5) -- (1, 0.8);
\draw[->, >=latex] (0, -1.5) -- (0, -1.2);
\draw[->, >=latex] (1, -1.5) -- (1, -1.2);
% division a la mitad
\draw[-, >=latex, thin] (0, .5) -- (1, .5);
\end{tikzpicture}
\end{center}
    \caption{The double cover of the Klein bottle by the torus}
    \label{2c}
\end{figure}
It is standard that the corresponding elements $x_{1}, x_{2} \in \pi_{1}(\K \times \K)$ act on $\widetilde{\Z}$ by interchanging sign, while 
$y_{1}, y_{2} \in \pi_{1}(\K \times \K)$ act trivially. Subindices are used to indicate coordinate source. Likewise, 
$a_{1}, b_{1}, y_{2} \in \pi_1(\T \times \K)$ act trivially on $\overline{\Z}$, while $x_{2} \in \pi_{1}\left(\T \times \K \right)$ acts by sign interchange. With this preparation, a direct group-cohomology calculation (left as an exercise for the reader) shows that the induced map 
$$\left(\pi \times 1\right)_{*}: H_{4}\left(\T \times \K ; \overline{\Z}\right) \cong \Z \longrightarrow \Z \cong H_{4}(\K \times \K ; \widetilde{\Z})$$
is multiplication by $-2$, i.e., $\left(\pi \times 1\right)_{*}$ takes the twisted fundamental class $[\T \times \K]$ onto $-2[\K \times \K]$. Thus
\begin{align*}
\left(2\hspace{.5mm}\nu_{\K}^{\TC}\right) \cap[\K \times \K]
&=\nu_{\K}^{\TC} \cap (2[\K \times \K]) \\
&=\nu_{\K}^{\TC} \cap\left(\rule{0mm}{3.4mm}-(\pi \times 1)_{*}
[\T \times \K]\right) \\
&=-(\pi \times 1)_{*}\left((\pi \times 1)^{*}\left(\nu_{\K}^{\TC}\right) \cap\left[\T \times \K\right]\right),
\end{align*}
where the latter expression vanishes, since $(\pi \times 1)^{*}\left(\nu_{\K}^{\TC}\right)=0$, as observed at the end of the paragraph following the statement of Theorem~\ref{T}. Poincar\'e duality then yields $2\nu_{\K}^{\TC}=0$.
\end{proof}

The rest of the section is devoted to the proof of Theorem \ref{T}.

\smallskip
The following cohomology facts are standard and easy to prove (all cohomology groups below are taken with mod-2 coefficients). $H^*(\T)$ is generated by elements $\alpha, \beta \in H^{1}(\T)=\Z_{2} \oplus \Z_{2}$ subject to the relations $\alpha^{2}=\beta^{2}=0$ and $\alpha \beta=\gamma$, where $\gamma$ stands for the generator of $H^{2}(\T)=\Z_{2}$. Likewise, $H^{*}(\K)$ is generated by elements $\kappa, \lambda \in H^{1}(\K)=\mathbb{Z}_{2} \oplus \Z_{2},$ subject to the relation $\kappa \lambda=0$ and $\kappa^{2}=\lambda^{2}=\mu,$ where $\mu$ stands for the generator of $H^{2}(\K)=\Z_{2}$. Furthermore, the map $\pi^{*}: H^{*}(\K) \rightarrow H^{*}(\T)$ is determined by $\pi^{*}(\kappa)=\pi^{*}(\lambda)=\alpha+\beta$. With this information, the inequality $\TC_{\effl}^{\Z2}(\T) \geq 3$ follows easily from the well known estimate that the sectional category of a fibration $p\colon E \to B$ is bounded from below by the cup length of elements in the kernel of $p^{*}: H^{*}(B) \to H^{*}(E)$. Explicitly, in our situation, the fibration under consideration is $\epsilon\colon P\T \rightarrow \T \times \K$ which, in terms of the standard homotopy equivalence $\T \simeq P\T$, takes the form $(1, \pi): \T \to \T \times \K$. Then $\left(1, \pi)^{*}(\alpha \otimes 1+\beta \otimes 1 +1 \otimes \lambda\right)=\alpha+\beta+(\alpha+\beta)=0,$ whereas a direct calculation yields $\left(\alpha \otimes 1+\beta \otimes 1 +1 \otimes \lambda\right)^{3}=(\alpha+\beta) \otimes \mu \neq 0$.

\smallskip
More difficult is establish $\TC_{\effl}^{\Z_2}(\T) \leq 3$. Rather than using obstruction theory, we actually describe an (optimal) effectual motion planner with 4 domains, i.e., a partition of $\T \times \K$ into 4 subsets (not necessarily open, in view of Remark~\ref{ANR}), each admitting a
section for the corresponding restriction of $\varepsilon: P \T \to \T \times \K$.

\smallskip
We start by fixing some notation. Think of $\T$ as $\T=S^1 \times S^1$, where the first (second) $S^1$-coordinate will be depicted horizontally (vertically). In these terms, the antipodal involution on $\T$ becomes $\sigma(x_1,x_2)=(-x_1,\overline{x_2})$, where $\overline{z}$ stands for the complex conjugate of $z\in S^1$. For a point $x=\left(x_{1}, x_{2}\right) \in \T$, set $V_{x}:=\left\{x_{1}\right\} \times S^{1}$ ($H_{x}:=S^{1}\times\left\{x_{2}\right\}$), the ``vertical'' (``horizontal'') circle passing through $x$.
$$
\begin{tikzpicture}[scale=2]  
%Agujero interior del toro
\draw (4.8,-0.01) arc (0:180:0.8 and 0.35); 
\draw (3.15,0.1) arc (180:360:0.85 and 0.4) node[above left = 0.5cm] {}; 
%Circunferencia exterior del toro
\draw (5.5, 0.01) arc (0:360:1.5 and 1) node[above right] {}; 
%Circunferencia vertical de adelante
\draw (3.7,-0.28) arc (90:270:0.15 and 0.34) node[below] {$V_x$};
\draw[dashed] (3.7,-0.96) arc (-90:90:0.15 and 0.34);
%Circunferencia  Hx
\draw (5.1, 0.09) arc (0:360:1.1 and 0.6) node[below left = 1.5cm] {$H_x$}; 
%Punto
\filldraw[black](3.57,-.47) circle (1pt) node[below left]{$x$};
%Ejes
\draw[thick, ->, >=latex, dashed] (4,0,0) -- (6,0,0) node[right]{$x$};
\draw[thick, ->, >=latex, dashed] (4,0,0) -- (3.7,-0.8,1.5) node[below]{$y$};
\draw[thick, ->, >=latex, dashed] (4,0,0) -- (4,1.5,0) node[above]{$z$};
\end{tikzpicture}
$$
Set in addition:
\begin{itemize}
\item $M_{x}:=\left\{e^{i \theta} x_{1}\colon-\frac{\pi}{2} \leq \theta \leq \frac{\pi}{2}\right\} \times S^{1}$, the half handle determined by $x$.\vspace{-3mm}

\item $C_{x}^{I}:=\left\{e^{-\frac{i \pi}{2}}x_1\right\} \times S^{1}$, the ``left'' boundary component of $M_x$.\vspace{-3mm}

\item $C_{x}^{D}:=\left\{e^{\frac{i \pi}{2}}x_1\right\} \times S^{1}$, the ``right'' boundary component of $M_x$.\vspace{-3mm}

\item $C_{x}:=\left(S^{1} \times\left\{-x_{2}\right\}\right) \cap M_{x}$.\vspace{-3mm}

\item $A_{x}:=C_{x}^{I} \, \cup \, C_{x} \, \cup \, C_{x}^{D}$.\vspace{-3mm}

\item $x':=\left(x_{1},-x_{2}\right),$ the ``antipodal to $x$ in $V_{x}$''.\vspace{-3mm}

\item $a_{x}:=\left(e^{\frac{i \pi}{2}} x_{1},-\overline{x_2}\right)$, $b_{x}:=\left(e^{-\frac{i \pi}{2}} x_{1},-x_{2}\right), \text{ so \,} C_{x}^{I} \,\cap\, C_{x}=\left\{b_{x}\right\} \text{ and \;}\sigma(a_x)=b_x$.
\end{itemize}
$$\begin{tikzpicture}[scale=2]  
%Agujero interior del toro
\draw (4.8,-0.01) arc (0:180:0.8 and 0.35); 
\draw (3.15,0.1) arc (180:360:0.85 and 0.4); 
%Circunferencia exterior del toro
\draw (5.5,0.01) arc (0:360:1.5 and 1); 
%Circunferencia vertical de atrás
\draw (4.5,0.95) arc (90:270:0.15 and 0.34);
\draw (4.5,0.27) arc (-90:90:0.15 and 0.34) node[below left=0.2cm] {$C_x^D$};
%Circunferencia vertical de adelante
\draw[dashed] (3.3,-0.15) arc (90:270:0.15 and 0.36) node[above left=0.3cm] {$C_x^I$};
\draw[dashed] (3.3,-0.86) arc (-90:90:0.15 and 0.36);
%arco inferior que conecta las circ. verticales
\draw (3.4,-0.75) arc (-115:58:1.3 and 0.7) node[right=3mm] {$C_x$};
%Circunferencia de x
\draw[dashed] (4.78,-0.07) arc (108:270:0.15 and 0.39);
\draw[dashed] (4.8,-0.83) arc (-85:90:0.15 and 0.38);
%Punto x
\filldraw[black](4.68,-0.33) circle (1pt) node[left]{$x$};
%Punto -xv
\filldraw[black](4.93,-0.57) circle (1pt) node[right]{$x'$};
%Curva Mx
\draw[<-, >=latex] (3.5,-1.2) arc (-135:-14:0.75 and 0.16) node [right=0.2cm] {$M_x$};
\draw[->, >=latex] (5.3,-.9) arc (-63:75:.9 and 1)  node [above right=0mm] {$\frac{\pi}{2}$}; 
\node at (3.6, -1.45) {$-\frac{\pi}{2}$};
%Ejes
\draw[thick, ->, >=latex, dashed] (4,0,0) -- (6.3,0,0) node[right]{$x$};
\draw[thick, ->, >=latex, dashed] (4,0,0) -- (3.7,-0.8,1.5) node[below]{$y$};
\draw[thick, ->, >=latex, dashed] (4,0,0) -- (4,1.5,0) node[above]{$z$};
\end{tikzpicture}
$$
Finally set $\A:=H_{(1,-1)}=S^1 \times\{-1\}$ ($\B:=H_{(1,1)}=S^{1} \times\{1\}$), the ``inner'' (``outer'') horizontal circle. Note that both $\A$ and $\B$
are closed under the involution.

\smallskip
We are ready to define the domains $D_i$ $(i=1,2,3,4)$ and corresponding sections that complete the proof of Theorem~\ref{T}. The first domain is 
$$D_{1}:=\left\{(x, z) \in \T \times \K\colon \text{there exists \,} y \in M_{x}\setminus A_{x} \text { with } z=\pi(y)\right\}.$$
Note that $\sigma(y)\notin M_{x}$ if $y \in M_{x}\setminus A_{x}$. Therefore the $\pi$-preimage $y$ of $z$ in the definition of $D_1$ is unique. Furthermore, such an element $y\in\pi^{-1}(z)\cap(M_x\setminus A_x)$ clearly depends continuously on $(x,z)\in D_1$. Thus, a section $s_1:D_1 \to P\T$ as the one we need sends a pair $(x,z) \in D_1$ into the path in $\T$ from $x$ to $y$ depicted by the thick arrows in Figure~\ref{f1}, i.e., we first adjust the first $S^1$-coordinate, and then adjust the second $S^1$-coordinate. The continuity on $x$ and $y$ of these adjustments comes from the facts that $y\in M_x$ (for the first adjustment) and  $y\notin A_x$ (for the second adjustment). 

\smallskip
Note that $\T \times \K \setminus D_{1}=\left\{(x, z)\colon \pi^{-1}(z) \cap A_x \neq \varnothing \right\}.$
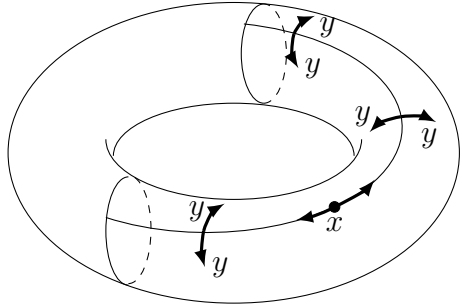
\begin{figure}[h!]
    \centering
\begin{tikzpicture}[scale=2]  
%Agujero interior del toro
\draw (4.8,-0.01) arc (0:180:0.8 and 0.35); 
\draw (3.15,0.1) arc (180:360:0.85 and 0.4); 
%Circunferencia exterior del toro
\draw (5.5,0.01) arc (0:360:1.5 and 1); 
%Circunferencia vertical de atrás
\draw (4.2,1) arc (90:270:0.15 and 0.33);
\draw[dashed] (4.2,0.34) arc (-90:90:0.15 and 0.33);
%Circunferencia vertical de adelante
\draw (3.3,-0.15) arc (90:270:0.15 and 0.36);
\draw[dashed] (3.3,-0.86) arc (-90:90:0.15 and 0.36);
%Arco superior que conecta las circ. verticales
\draw (3.15,-0.42) arc (-121:79:1.3 and 0.7); 
%Punto x
\filldraw[black](4.67,-0.35) circle (1pt) node[below]{$x$};
%Flechas x
\draw[->,>=latex, very thick] (4.67,-0.35) arc (-20:-45:1.1 and .24);
\draw[->,>=latex, very thick] (4.67,-0.35) arc (-10:-4:21 and 1.75);
%y izquierda
\draw[->,>=latex, very thick] (3.8,-.51) arc (165:120:0.3 and 0.3) node[left=1mm]{\raisebox{-4mm}{$y$}};
\draw[->,>=latex, very thick] (3.8,-.51) arc (165:200:0.3 and 0.4) node[right=0.01cm]
{$\!y$};
%y derecha
\draw[->,>=latex, very thick] (5.11,.25) arc (100:60:0.35 and 0.3) node[below right=-0.11cm]{$\!\!\!\!y$};
\draw[->,>=latex, very thick] (5.11,.25) arc (100:140:0.35 and 0.3) node[left=-0.04cm]{\raisebox{5mm}{$y\!\!$}};
%y atrás
\draw[->,>=latex, very thick] (4.4,.8) arc (165:120:0.3 and 0.2) node[right=-1mm]{\raisebox{-5mm}{$y$}};
\draw[->,>=latex, very thick] (4.4,.8) arc (165:210:0.3 and 0.3) node[right]{$\!y$};
\end{tikzpicture}
\caption{Motion planning in $D_1$}
\label{f1}
\end{figure}
The second domain is 
$$D_{2}:=\{(x, z)\colon x \notin \A \cup \B \text{ and there exists }y \in A_{x}\setminus\left(C_{x}^{I} \cup\left\{a_{x}\right\}\right) \text {with } \pi(y)=z\}.$$
As in the case of $D_1$, the $\pi$-preimage $y$ of $z$ in the definition of $D_{2}$ is unique and depends continuously on $(x, z) \in D_{2}$. Since $C^D_x$ is closed, the continuity assertion is not completely obvious when $y \in C_{x}^{D}\setminus\left\{a_{x}\right\}$ and, for such a case, we offer the following argument. Let~$V$ be a small neighborhood of~$y$, say small enough so that $V$ and $\sigma(V)$ are contained in different sides of the hyperplane $P_x$ in $\mathbb{R}^3$ spanned by $V_x$ (see Figure~\ref{tubular}). Since $\sigma(y) \neq b_x \text { (for } y \neq a_x)$, we can shrink~$V$ if needed so to assume that $\sigma(V)$ does not intersect some tubular neighborhood $\tau$ of $C_x$ (see Figure~\ref{tubular}).
\begin{figure}[h!]
\begin{center}
\begin{tikzpicture}[scale=2]  
%Agujero interior del toro
\draw (4.8,-0.01) arc (0:180:0.8 and 0.35); 
\draw (3.15,0.1) arc (180:360:0.85 and 0.4); 
%Circunferencia exterior del toro
\draw (5.5,0.01) arc (0:360:1.5 and 1); 
%Circunferencia vertical izquierda
\draw (3.21,-0.05) arc (70:200:0.37 and 0.37) node[below left=-1.5mm] {$C_x^I$};
\draw[dashed] (2.76,-0.55) arc (-130:40:0.32 and 0.35);
%Circunferencia vertical derecha
\draw (5.37,-0.4) arc (0:115:0.4 and 0.4);
\draw[dashed] (4.8,-0.03) arc (-200:-50:0.34 and 0.35) node[right] {$C_x^D$};
%Arco que conecta las circ. verticales
\draw (2.95,-0.055) arc (185:355:1.11 and 0.5);
%Punto b
\filldraw[black](2.95,-0.055) circle (0.5pt) node[above=0.1cm]{$\!\!\!\!\!\!\!\!b_x$};
%Punteado alrededor de arco
\draw[dashed] (3.05,-0.04) arc (190:350:1.04 and 0.5);
\draw[dashed] (2.85,-0.08) arc (190:350:1.21 and 0.6) node[below left = 1.2cm] {$\tau$};
\draw[dashed] (3.05,-0.04) arc (0:180:0.1);
\draw[dashed] (5.24,-0.08) arc (0:160:0.07 and 0.12);
\filldraw[black](5.17,-0.058) circle (0.5pt);
%Punto yg
\draw[dash dot](3.2,-0.52) circle (1.5pt);
\filldraw[black](3.2,-0.52) circle (0.5pt) node[left]{} node[below right]{$\!\!\!\!\sigma(V)$};
%Punto y
\draw[dash dot](5.31,-0.2) circle (1.5pt);
\filldraw[black](5.31,-0.2) circle (0.5pt) node[below left]{$y\!\!$} node[above right]{$\!\!V$};
%Punto ax
\filldraw[black](5.17,-0.49) circle (0.5pt) node[below=-0.03] {$a_x$};
%Circunferencia de x
\draw (4.3,-0.49) arc (0:85:0.16 and 0.19);
\draw[dashed] (4.14,-0.3) arc (90:200:0.17 and 0.38);
%Punto x
\filldraw[black](3.985,-0.81) circle (0.5pt) node[right]{$x$};
%Punto -xv
\filldraw[black](4.3,-0.49) circle (0.5pt) node[below=0.02cm]{$x'$};
%hiperplano
\draw[-, >=latex] (3.7,1.8,0) -- (4.5,.8,0);
\draw[-, >=latex] (4.5,-1.7,0) -- (4.5,.8,0);
\draw[-, >=latex, dashed] (4.5,-1.7,0) -- (3.7,-.7,0);
\draw[-, >=latex] (4.5,-1.7,0) -- (3.94,-1,0);
\draw[-, >=latex,dashed] (3.7,-.7,0) -- (3.7,-.23,0);
\draw[-, >=latex] (3.7,-.28,0) -- (3.7,.32,0);
\draw[-, >=latex, dashed] (3.7,.32,0) -- (3.7,1,0);
\draw[-, >=latex] (3.7,1,0) -- (3.7,1.83,0);
\node at (4.7,-1.5) {$P_x$};
\end{tikzpicture}
\end{center}
\caption{Neighborhoods $V$, $\sigma(V)$ and $\tau$}\label{tubular}\end{figure}
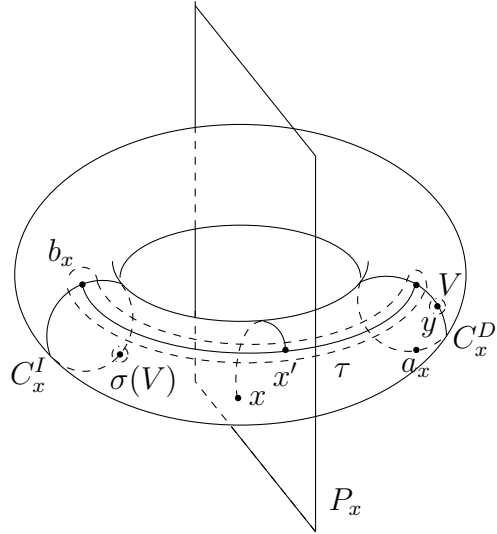
Since $x'$, $C_x$ and $C^D_x$ depend continuously on $x$, there is a neighborhood $U$ of $x$ such that, for any $x_1 \in U$, $C_{x_1} \subseteq \tau$ and $C^D_{x_1}$ lies on the same side of $P_x$ as $C^D_x$ does. Under such conditions, for an element $\left(x_1, z_1\right) \in(U \times \pi(V)) \cap D_{2}$, the element $y_1 \in A_{x_1}\setminus\left(C_{x_1}^I \cup \{a_{x_1}\}\right)$ satisfying $\pi\left(y_1\right)=z_1$ will also satisfy $y_1 \in V \cup \sigma(V)$ and, for the required continuity, we need to make sure that in fact $y_1 \in V$. Assume, for a contradiction, that $y_1\in\sigma(V)$. Given the choosing of $V$, it follows that $y_1\in C_{x_1}$. So $y_1 \in C_{x_1} \subseteq \tau$, which contradicts $\tau \cap \sigma(V) = \varnothing$. Having established the fact that $y\in\left(A_x\setminus(C^I_x\cup a_x)\right)\cap\pi^{-1}(z)$ depends continuously on $(x,z)\in D_2$, the rest is easy: A section $s_2:D_2 \to P\T$ as the one we need sends a pair $(x,z) \in D_2$ into the path in $\T$ from $x$ to $y$ depicted by the thick arrows in Figure~\ref{laded2}. Note that the condition $x\notin\A\cup\B$ implies that $a_x$ is not the intersection of the half horizontal arc $C_x$ with the vertical circle $C_x^D$, so that the motion planning is precisely as indicated in Figure~\ref{laded2}.
\begin{figure}
\begin{center}
\begin{tikzpicture}[scale=2]  
%Agujero interior del toro
\draw (4.8,-0.01) arc (0:180:0.8 and 0.35); 
\draw (3.15,0.1) arc (180:360:0.85 and 0.4); 
%Circunferencia exterior del toro
\draw (5.5,0.01) arc (0:360:1.5 and 1); 
%Circunferencia vertical de atrás
\draw (4.3,0.98) arc (90:270:0.15 and 0.33);
\draw (4.3,0.32) arc (-90:90:0.15 and 0.33);
%Circunferencia vertical de adelante
\draw[dashed] (2.51,-0.08) arc (180:0:0.35 and 0.18);
\draw[dashed] (2.51,-0.09) arc (-180:0:0.35 and 0.18);
%Circunferencia exterior del toro
\draw (4.45,0.7) arc (68:-155:1.1 and 0.7); 
%nodos blancos
\filldraw[color = black, fill=white] (4.15, 0.7) circle (1pt) node {};
\filldraw[color = black, fill=white] (3.03, -0.23) circle (1pt) node {};
%flechitas que salen de [a_x]
\node (D) at (3.5, 1.2) {$[a_x]$};
\draw[->,>=latex] (3.3,1.15) to[out=210, in=90]  (3.03,-0.15);
\draw[->,>=latex] (3.7,1.15) to[out=-10, in=90]  (4.15,0.8);
%flechitas del circulo trasero
\draw[->,>=latex,very thick] (4.38,0.73) arc (40:140:0.1 and 0.5) node[below=-0.2cm] {\scriptsize $y\!\!\!\!$};
\draw[->,>=latex, very thick] (4.35,0.45) arc (-60:-130:0.1 and 0.5) node[above=-1.4mm] {\scriptsize $\,\,\,y$};
%flechitas y
\draw[<-,>=latex, very thick] (4.3,-0.635) node [below=-.4mm] {\scriptsize $y$} arc (9:150:0.1 and 0.3);
\draw[->,>=latex, very thick] (4.15,-0.75) arc (-95:-120:2 and 1.2) node[left=-0.1cm] {\scriptsize $y$};
\draw[->,>=latex, very thick] (4.45,-0.5) arc (-85:-31:0.82 and 1.15) node [above left=-0.2cm] {\scriptsize $y$};
\draw[->,>=latex, very thick] (4.99,0.2) arc (5:40:1.7 and 0.66) node [below=0cm] {\scriptsize $y$};
%Semicircunferencia de x
\draw[dashed] (4.35,-0.635) arc (9:160:0.15 and 0.4);
\filldraw[] (4.35,-0.62) circle (0.5pt);
\filldraw[] (4.06,-0.55) circle (0.5pt) node[left=-0.1cm] {$x$};
\end{tikzpicture}
\end{center}
\caption{Motion planning in $D_2$}\label{laded2}\end{figure}
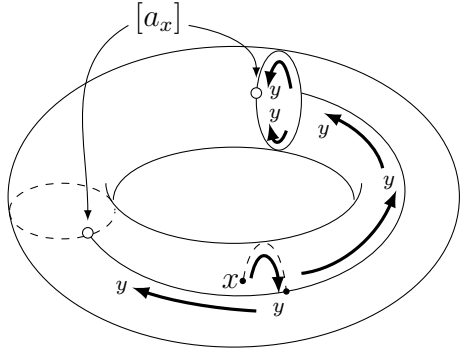

\smallskip
Note that $\T \times \K\setminus\left(D_{1} \cup D_{2}\right)$ consists of the pairs $(x, z)$ satisfying one of the following two conditions:
\begin{itemize}
    \item [(i)] $x \notin \A \cup \B$ and $z=\left[a_{x}\right]$.
    \item [(ii)] $x \in \A \cup \B$ and $\pi^{-1}(z) \cap A_{x} \neq\varnothing$. 
\end{itemize}
The third domain combines the pairs satisfying (i) with some of the pairs satisfying~(ii). Let $D_{31}$ consist of the pairs $(x, z) \in \T \times \K$ satisfying (i) above. Since $\left[a_{x}\right]=\left[b_{x}\right]$, a section $s_{31}: D_{31} \to P\T$
as the one we need sends $(x, [a_x]) \in D_{31}$ into the path in $\T$ from $x$ to $b_x$ depicted by the thick arrows in Figure~\ref{d31}.
\begin{figure}
\begin{center}
\begin{tikzpicture}[scale=2]  
%Agujero interior del toro
\draw (4.8,-0.01) arc (0:180:0.8 and 0.35); 
\draw (3.15,0.1) arc (180:360:0.85 and 0.4); 
%Circunferencia exterior del toro
\draw (5.5,0.01) arc (0:360:1.5 and 1); 
%Circunferencia vertical de atrás
\draw (4.2,1) arc (90:270:0.15 and 0.33);
\draw[dashed] (4.2,0.34) arc (-90:90:0.15 and 0.33);
%Circunferencia vertical de adelante
\draw (3.7,-0.28) arc (90:270:0.15 and 0.34);
\draw[dashed] (3.7,-0.96) arc (-90:90:0.15 and 0.34);
%Arco superior que conecta las circ. verticales
\draw (3.55,-0.6) arc (-100:79:1.4 and 0.7); 
%Semicircunferencia de x
\draw[dashed] (4.32,-0.27) arc (90:173:0.15 and 0.45);
\draw (4.35,-0.27) arc (90:28:0.15 and 0.45);
%nodos
\filldraw[black] (3.55, -0.6) circle (1pt) node [left]{$b_x$};
\filldraw[black] (4.17, -0.72) circle (1pt) node [below]{$x$};
%flechitas
%\filldraw[black] (4.48, -0.53) circle (1pt) node {};
\draw[thick, ->,>=latex, very thick] (4.235,-0.7) arc (160:46:0.12 and 0.45);
\draw[thick, <-,>=latex, very thick] (3.6,-0.66) arc (-90:-25:0.97 and 0.135); 
\end{tikzpicture}
\end{center}
\caption{Motion planning in $D_{31}$}\label{d31}\end{figure}
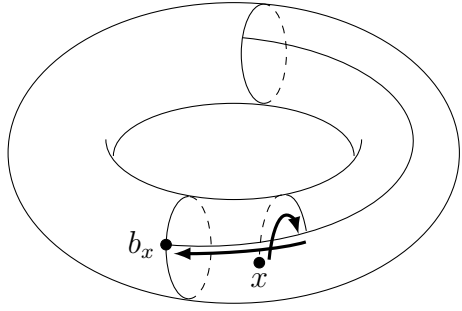
On the other hand, let $D_{32}$ consist of the pairs $(x, z)$ in (ii) for which there exists $y \in A_{x}\setminus\left(C_{x}^{I}\cup\left\{a_{x}\right\}\right)$ with $\pi(y)=z$. An argument identical to the one given in the case of $D_{2}$ shows that the element $y$ on the definition of $D_{32}$ is unique and depends
continuously on $\left(x, z\right) \in D_{32}$. Thus, a section $s_{32}: D_{32} \rightarrow P \T$ as the one we need sends $(x, z) \in D_{32}$
into the path in $\T$ from $x$ to $y$ depicted by the thick arrows in Figure~\ref{d32}. Note that the condition $x \in \A \cup \B$ implies that $a_x$ is the
intersection point (removed from Figure~\ref{d32}) of $C_x$ and $C_x^D$, which yields continuity on $(x,z)$ of $s_{32}$.
\begin{figure}\begin{center}
\begin{tikzpicture}
\draw(1,0) circle (1.5);
\draw(-6.6,0) .. controls (-3.2,-0.8) .. (-0.5,0);
\filldraw[color = black] (-4,0) circle (2pt) ;
\draw[dashed] (-4,0) arc (160:90:0.4 and 1.2) ;
\draw (-3.5,0.8) arc (90:-10:0.4 and 1.2) ;
\draw[very thick, ->,  >=latex] (-4.1,0.3) arc (160:-8:0.6 and 1.2) ;
\draw[very thick, ->, >=latex](-2.8 ,-0.4) .. controls (-1.7,-0.2) .. (-0.8,0.1);
\draw[very thick, <-, >=latex](-6.4,-0.23) .. controls (-3.8,-0.75) .. (-3.2 ,-0.73);
\draw[very thick, ->, >=latex] (0,0.5) arc (160:45:1);
\draw[very thick, ->, >=latex] (2.05,0) arc (10:-140:1);
\node (A) at (-4.3, 0.1) {$x$};
\node (B) at (-2.95, -0.45) {$y$};
\node (D) at (-6.6, -0.3) {$y$};
\node (C) at (2, 0.5) {$y$};
\node (D) at (0, -0.5) {$y$};
\filldraw[color = black, fill=white] (-0.5,0) circle (3pt) node {};
\filldraw[color = black, fill=white] (-6.7,0) circle (3pt) node {};
\end{tikzpicture}
\end{center}
\caption{Motion planning in $D_{32}$}\label{d32}\end{figure}
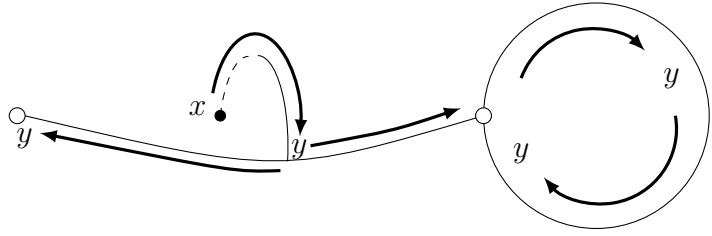
Since an element $(x, z) \in \overline{D_{31}}$ must have $z=\left[a_{x}\right],$ while an element $(x, z) \in \overline{D_{32}}$ must have $x \in
\A \cup \B$, it follows that
\[
\overline{D_{31}} \cap D_{32}=\varnothing=D_{31} \cap \overline{D_{32}}.
\]
Therefore $s_{31}$ and $s_{32}$ yield a section $s_{3}$ on $D_{3}:=D_{31} \cup D_{32} $ as the one we need.

\smallskip
The last domain is
\[
D_{4}:=\T \times \K-\left(D_{1} \cup D_{2} \cup D_{3}\right),
\]
i.e., $D_{4}$ consist of the pairs $\left(x, \left[a_{x}\right]\right) \in(\A \cup \B) \times \K$. As in the case of $D_{31}, D_{4}$
admits a section $s_4: D_4 \to P \T$ as the one we need. The proof of Theorem \ref{T} is now complete.

\section{Effectual motion planners on spheres}\label{seccionfinal}
We close the paper by describing an effectual motion planner on $S^n\subset\mathbb{R}^{n+1}$ with $n+2$ domains in the general case, and $n+1$ domains provided $n\in\{1,3,7\}$. We conjecture that these are optimal planners, i.e., that $\TC^{\mathbb{Z}_2}_{\effl}(S^n)=n+\delta_n$, where $\delta_n=1$ except for $\delta_1=\delta_3=\delta_7=0$.

Set $k=n+\delta_n$ and choose a continuous map $v=(v_0,v_1,\ldots,v_k)\colon S^n\to (S^n)^{k+1}$ with $v_0(p)=p$ and so that
\begin{equation}\label{generadores}
\mbox{$v_0(p),v_1(p),\ldots,v_k(p)$ generate $\mathbb{R}^{n+1}$ for each $p\in S^n$.}
\end{equation}
For instance, in the non-parallelizable case, $v_i(p)$ can be taken to be the $i$-th canonical basis element (for all $p$). For $p\in S^n$, let $H_i(p)=\{q\in\mathbb{R}^{n+1}\colon \langle q,v_i(p)\rangle=0\}$ and $H^+_i(p)=\{q\in\mathbb{R}^{n+1}\colon \langle q,v_i(p)\rangle>0\}$, where $\langle-,-\rangle$ denotes the standard inner product in $\mathbb{R}^{n+1}$. Recall that $\pi\colon S^n\to P^n$ stands for the projection. The sets $$D_i=\left\{(p,\ell)\in S^n \times P^n\colon \ell\in \pi\left(\left(\bigcap_{0\leq j< i}H_j(p)\right)\cap H_i^+(p)\cap S^n\right)\right\},\quad 0\leq i\leq k,$$ cover $S^n \times P^n$ in view of~(\ref{generadores}), so it suffices to construct a section for each restriction $\epsilon_{|D_i}$. Note that, for each $(p,\ell)\in D_i$, the intersection $\ell \cap S^n\cap H^+_i(p)$ reduces to a point $q(p,\ell)$ which is different from $-p$ and depends continuously on $(p,\ell)$. The required section $D_i\to PS^n$ sends $(p,\ell)$ into the shortest geodesic (at constant speed) from $p$ to $q(p,\ell)$.

%\bibliographystyle{plain}
%\bibliography{Biblio}

\bigskip

\small
{\sc Departamento de Matem\'aticas

Centro de Investigaci\'on y de Estudios Avanzados del I.P.N.

Av.~Instituto Polit\'ecnico Nacional no.~2508, San Pedro Zacatenco

M\'exico City 07000, M\'exico}

\tt cadavid@math.cinvestav.mx

jesus@math.cinvestav.mx

\bigskip

{\sc Departamento de Formaci\'on B\'asica Disciplinaria

Unidad Profesional Interdisciplinaria de Ingenier\'ia Campus Hidalgo

Carretera Pachuca-Actopan Km.~1+500

Ciudad del Conocimiento y la Cultura 42162, Hidalgo}

\tt bgutierrezm@ipn.mx

\bigskip

{\sc Departamento de Matem\'atica

Universidade de S\~ao Paulo

Instituto de Ci\^encias Matem\'aticas e Computa\c{c}\~ao -- USP

Avenida Trabalhador S\~ao-carlense, 400 -- Centro

S\~ao Carlos 13566-590, Brazil}

\tt cesarzapata@usp.br

\end{document}